\newtheorem{Theorem}{Theorem}
\newtheorem{Proposition}{Proposition}[section]
\newtheorem{Lemma}{Lemma}[section]
\newtheorem{Remark}{Remark}[section]
\numberwithin{equation}{section}
\def\div{\mathop{\rm div}\nolimits}
\def\eqdefa{\buildrel\hbox{\footnotesize def}\over =}
\newcommand{\beq}{\begin{equation}}
\newcommand{\eeq}{\end{equation}}
\newcommand{\ben}{\begin{eqnarray}}
\newcommand{\een}{\end{eqnarray}}
\newcommand{\beno}{\begin{eqnarray*}}
\newcommand{\eeno}{\end{eqnarray*}}
\newcommand{\bali}{\begin{aligned}}
\newcommand{\eali}{\end{aligned}}
\newcommand{\ve}{\epsilon}
\newcommand{\na}{\nabla}
\newcommand{\ud}{\mathrm{d}}
\newcommand{\ii}{\mathbf{i}}
\newcommand{\jj}{\mathbf{j}}
\newcommand{\kk}{\mathbf{k}}
\newcommand{\RR}{\mathbb{R}}
\newcommand{\CR}{\mathcal{R}}
\newcommand{\CU}{\mathcal{U}}
\newcommand{\BS}{{\mathbb{S}^2}}
\def\({\left (}
\def\){\right )}
\def\p{\partial}
\def\O{\Omega}
\begin{document}

\title{ The Oseen-Frank limit of Onsager's molecular theory for liquid crystals}

\author{Yuning Liu }
\address{Joint Mathematics Institute,   New York University Shanghai, 200122, Shanghai, P.R.China}
\email{yl67@nyu.edu}

\author{Wei Wang}
\address{Department of Mathematics, Zhejiang University, 310027, Hangzhou, P. R. China}
\email{wangw07@zju.edu.cn}

\subjclass{}

\vspace{-0.3in}
\begin{abstract}
We study the relationship between Onsager's molecular theory and the Oseen-Frank theory  for nematic liquid crystals. Under the molecular setting, we consider the free energy that includes the effects of nonlocal molecular interactions. By imposing  the strong anchoring boundary condition on the second moment of the number density  function,  we prove the existence of global minimizers for the free energy. Moreover,  when the re-scaled interaction distance tends to zero, the  corresponding global minimizers will converge to an uniaxial distribution whose orientation is described by a minimizer of Oseen-Frank energy.
\end{abstract}

\maketitle

\section{Introduction}
The liquid crystal state is a distinct phase of matter   that is between those of ordinary liquid and solid crystal.
They may flow as a liquid while the molecules are oriented in a crystal-like way. A classification of liquid crystals based on their structural properties was first proposed by G. Friedel in 1922 and they are generally divided into three main classes: in the {\it nematic phase}, the molecules tend to have the same alignment but their positions are not correlated. In the {\it cholesteric phase}, the molecules tend to have the same alignment which varies regularly through the medium with a periodicity distance. In  the {\it smectic  phase}, the molecules are arranged in layers and exhibit some correlations in their positions in addition to the orientational ordering.

In this work, we shall restrict ourselves to   the nematic case. Since  the long planar molecules  usually involved are symbolized by ellipses, they  can be characterized by long-range orientational order: the long axis of the molecules tend to align along a preferred direction.
There are mainly three kinds of continuum  theories, which use different  {\it order parameters} in physics, to capture such anisotropic behavior of liquid crystals.
Among them, the most intuitive one is the {\it vector  theory}, which uses a unit-vector field $n(x)$ to represent the locally preferred  direction that the liquid crystal molecules self-orient themselves
near $x$. On this direction, the most well-known model is the Oseen-Frank  theory based on curvature elasticity theory, in which  the distortion energy of the liquid crystals is  characterized by
the following  Oseen-Frank energy:
\begin{equation}\label{eq:oseen}
  E_{OF}[n]=\tfrac {k_1} 2(\na\cdot n)^2+\tfrac {k_2} 2( n{\cdot}(\na\wedge n))^2
+\tfrac {k_3} 2|n{\wedge}(\na\wedge n)|^2
+\tfrac{k_2+k_4}2\big(\textrm{tr}(\na n)^2-(\na\cdot n)^2\big)
\end{equation}
where $k_1, k_2, k_3, k_4$ are elasticity constants and $\wedge$ denotes the wedge product for two vectors in $\RR^3$. The first three terms in \eqref{eq:oseen} correspond to the three typical pure deformations:
splay, twist and bend while  the last component is actually a null lagrangian due to Ericksen   \cite{Ericksen1976}. Various analytic results related to the global minimizers of \eqref{eq:oseen} under Dirichlet boundary condition is investigated in \cite{HardtKinderlehrerLin1986}. In the simplest setting: $k_1=k_2=k_3=k_F, k_4=0$,  Oseen-Frank energy \eqref{eq:oseen}
  reduces to the Dirichlet energy
\begin{equation}\label{eq:harmonic}
E_{OF}[n]=\tfrac {k_F} 2|\na n|^2.
\end{equation}
Minimizing \eqref{eq:harmonic} among  mappings from $\O$ into $\BS$ under certain boundary conditions   leads to  harmonic maps into $\BS$, which  are widely studied in the past few decades, see for example \cite{LinWang2008}.
For the purpose of describing the hydrodynamics of liquid crytals,  Ericksen and Leslie
formulated the full system in \cite{EricksenARMA1962Hydrostatics,LeslieARMA1968}. This system also attracts the interests of  analysts, especially those working on geometric analysis, for its relationship with harmonic map heat flow. See \cite{LinWang2014} for a  review  of   recent progresses on the  mathematics of Ericksen-Leslie system.

The second theory, which will be investigated in this work, is the {\it molecular theory}. This is a microscopic theory which uses a number density function
$f(x,m)$ to characterize, at each point $x$, the number density of molecules whose orientations are parallel to the direction $m\in\BS$. It  was first presented by Onsager in \cite{onsager1949} to model the isotropic-nematic phase transition and later developed by Maier and Saupe in a series of paper, see for example \cite{MaierSaupe1958}. In \cite{doi1988theory}, this theory is developed by Doi et al. for the sake of studying the  hydrodynamics of liquid crystals.  In Onsager's theory, each spatial position $x$ of   material occupying $\Omega\subset\mathbb{R}^d$ is associated with a   number density function
 \begin{equation*}
   f(x,m):\O\times\BS\mapsto \overline{\mathbb{R}^+}
 \end{equation*}
 which indicates the fraction of molecules per unit solid angle having various orientations. Onsager proposed a mean-field model to
describe isotropic-nematic phase transition for liquid crystals. His expression for the  free energy, at each fixed $x\in\O$ takes the following form:
\begin{equation}\label{Intro-energy}
  A[f](x)=\int_{\BS}\(f(x, m)\log{f(x, m)}+f( x,m)\CU[f](x,  m)\)~\ud m,
\end{equation}
where $\CU[f]( x,m)$ is the mean-field interaction potential defined by
\begin{equation}\label{potential-ms}
  \CU[f](x, m)=\int_{\BS}B(m,m')f(x,m')\ud m'~\text{with}~B(m,m')=\alpha |m\wedge m'|^2.
\end{equation}
Here  $B(m,m')$ is the interaction potential between two molecules with orientations $m$ and $m'$ respectively and $\alpha$  is a parameter
that measures the intensity of the potential.
In Onsager's original treatment \cite{onsager1949}, $B(m,m')$ is chosen to be
\begin{align}\label{potential-on}
  B(m,m')=\alpha |m\wedge m'|,
\end{align}
  calculated from the excluded-volume potential for hard rods. The form \eqref{potential-ms} was introduced later by
Maier and Saupe and is widely employed for it shares qualitatively the same features as Onsager's original one \eqref{potential-on} at the same time  easier to handle analytically. Especially, energy \eqref{Intro-energy} with \eqref{potential-ms} is closely  related  to the variant   second moment of $f$, which is usually called $Q$-tensor and defined by
\begin{equation}\label{eq:1.05}
  Q[f](x)=\int_{\BS}\(m\otimes m-\tfrac 13\mathbb{I}_3\)f(x,m)\ud m.
\end{equation}
More precisely,
\begin{equation}\label{eq:1.02}
  A[f]=\int_\BS f\log f\ud m+\alpha\(\tfrac23-|Q[f]|^2\).
\end{equation}
In order to  characterize the distortion elasticity energy,  each family of number density function
 $f(x,m)$
 is associated with a nonlocal free energy, proposed in \cite{EZ, yu2007kinetic}
\begin{equation} \label{eq:free energy-inhom-intro}
{A}_\ve[f]=\int_\O\int_{\BS}\(f( x, m)\log{f( x, m)}
+f( x, m)\CU_\ve [f](   x,m)\)\ud m\ud x.
\end{equation}
Here  $\CU_\ve$ is a spatial nonhomogeneous mean-field  potential, chosen to be
\begin{equation}\label{inhomopot}
  \CU_\ve[f](x,m)=\int_\Omega\int_\BS{B}( x, m;  x', m')f( x', m')\ud x'\ud m',
\end{equation}
where ${B}(x, m; x', m')$ is the interaction kernel between two molecules
with different configuration $(x,m)$ and $(x',m')$ respectively. In this paper, we will follow \cite{EZ, yu2007kinetic} and choose $B$ like
\begin{align}\label{eq:On-interaction}
B( x, m; x', m')=\alpha| m\wedge m'|^2g_\ve(x-x')
\end{align}
with
\begin{equation*}
  g_\ve(x):=\tfrac 1{\sqrt{\ve}^3}g(\tfrac x{\sqrt{\ve}})
\end{equation*}
where  the parameter $\sqrt\ve$ denotes the re-scaled length of the molecule as well as the typical molecular interaction distance.
Throughout this work,   the interaction kernel $g$ will be a non-negative, smooth, radial function with exponential decay and satisfies
$\int_{\RR^d}g(x) =1$.

 The third continuum theory for nematic liquid crystal   is called Landau-de Gennes theory, or {\it $Q$-tensor theory}. In this framework, the order parameter is   a $3\times 3$ traceless symmetric
matrix-valued function $Q(x)$ characterizing the orientation of molecules near $x$, see \cite{DeGennesProst1995} for instance. As a phenomenological theory, it was derived   based on the thermodynamical consideration of the Gibbs free energy (see \cite{Landau1958}) of the system  and it  gives a phenomenological description of the nematic-isotropic phase transition of liquid crystals. See \cite{MZ} for a mathematical approach.

These three theories all have made great successes on describing either the statics or the dynamics of liquid crystals. In the meantime,    efforts have been made to establish  the relationships between these three theories.
In the   static case, for instance, the attempt  on determining the  elastic constants in Oseen-Frank  energy \eqref{eq:oseen} from molecular theories can be found in \cite{SomozaTarazona1989}.
In \cite{maiersaupe}, a singular bulk energy for $Q$-tensor model is derived from the entropy term in Onsager's molecular theory.
 A systematic way of  deriving macroscopic models, namely, $Q$-tensor models and vector  models,
from Onsager's molecular theory is developed in \cite{HLWZZ}.

For the hydrodynamics, Kuzuu-Doi \cite{KD} formally derive the Ericksen-Leslie equation from the homogenous Doi-Onsager
equation and determine the Leslie coefficients by taking Deborah number to be $0$. For the sake of recovering Ericksen stress tensor from microscopic theory, in  \cite{EZ} and \cite{wang2002kinetic}, a kinetic model for nonhomogeneous
liquid crystalline polymers is presented and the full Ericksen-Leslie equation is derived.
In \cite{MR3366748}, the authors rigorously proved that,  when Deborah number tends to $0$, the  solution of Doi-Onsager equations will converge
to the smooth solution of Ericksen-Leslie equations. On the other hand, in \cite{FCL}, various dynamical $Q$-tensor model is derived from Doi's kinetic theory by using different moment closure methods. See also
 \cite{HLWZZ} for  a dynamical $Q$-tensor model derived from kinetic theory which satisfies energy dissipation law.

Concerning the   connection between $Q$-tensor theory and vector theory,
one can consult   \cite{bauman2012analysis,golovaty2014minimizers,MZ} for the static case and \cite{wang2015rigorous} for the dynamics. In these works,  the asymptotic behavior of small
elasticity coefficients  of Landau-de Gennes model is rigorously analyzed and the Oseen-Frank  model or the Ericksen-Leslie model is recovered in the limit. These results shows that the $Q$-tensor theory and vector theory agree well away from the singularities.

For the sake of understanding  the liquid crystal defects  predicted by Oseen-Frank  model or Ericksen-Leslie system,  the weak solution provides a suitable framework. However,
the connections between weak solutions  of the molecular models and that of the vector models are not fully explored and this article is intended to contribute to this direction.
We shall study the asymptotic behaviors of  the minimizers of \eqref{eq:free energy-inhom-intro}-\eqref{eq:On-interaction} under strong anchoring boundary condition. Our main result  is,
when the parameter $\ve$   tends to $0$, the global minimizers   will converge to uniaxial distributions  that are parameterized  by     harmonic maps.

Through this work, $\O$ will be a simply-connected, bounded domain of class $C^0$ in $\mathbb{R}^d$ with $d=2,3$.
 We shall consider the number density function in the following function space
\begin{equation}\label{eq:1.09}
  \mathscr{H}(\O)=\{f\in L^1(\BS\times\Omega),f(x,m)\geq 0,\|f(\cdot,x)\|_{L^1(\BS)}=1,~a.e.~ x\in\Omega\}.
\end{equation}
For each fixed $\ve>0$, we   consider  the minimizing problem
  \begin{equation}\label{globalmini2}
      \inf_{f\in\mathscr{A}}  A_\ve[f]
  \end{equation}
  in the admissible space
  \begin{equation}\label{admissible}
    \mathscr{A}:=\left\{f\in \mathscr{H}(\O)~|~Q[f](x) = Q[h_{n_b}](x)~\text{in}~\O^\delta \right\}.
  \end{equation}
Recall that $Q[f]$ is the variant second moment  of $f$ defined by \eqref{eq:1.05}. In \eqref{admissible}, $h_{n_b}$ is defined via
 \begin{equation}\label{eq:1.01}
   h_{n_b}:=\frac{e^{\eta(m\cdot n_b)^2}}{\int_\BS e^{\eta(m\cdot n_b)^2} \ud m},
 \end{equation}
 for some $n_b$ satisfying
  \begin{equation}\label{eq:boundary}
    n_b\in H^1(\RR^d;\RR^3)~\text{with compact support and}~|n_b(x)|=1~\text{a.e. for}~x\in\O.
  \end{equation}
The parameter $\eta$ in \eqref{eq:1.01} is a constant depending on $\alpha$ and will be precised later. Loosely speaking, we shall choose $\eta$ such that \eqref{eq:1.01} is the only  global minimizer of the homogeneous Maier-Saupe energy \eqref{eq:1.02}.
The ``thin shell" $\O^\delta$ is defined as
 \begin{equation*}
   \O^\delta:=\{x\in\O~|~\operatorname{dist}(x,\p\O)\leq  \delta\}
 \end{equation*}
 where $\delta>0$ is  the parameter  characterizing the strength of boundary effect.
  For a liquid crystal material in a bounded domain $\O$, the study of  its static configuration predicted by Onsager's energy \eqref{eq:free energy-inhom-intro} should take into account the boundary effect. In our case, we   assume a strong anchoring boundary condition in \eqref{admissible} by prescribing the orientation of the molecules in a boundary layer that is slightly wider than $\sqrt{\ve}$,  the re-scaled length of the molecular.
More precisely,   we assume
 \begin{equation}\label{eq:1.20}
   \delta=\ve^{1/2-\sigma}
 \end{equation}
 for any fixed $\sigma\in(0,\frac 12)$. As the reader will see in the sequel, the non-local boundary condition in \eqref{admissible} will reduce to the `usual' strong anchoring boundary condition when $\ve\to 0$.

Throughout this work,  for any $f\in\mathscr{A}$, we shall denote
 \begin{equation}\label{extension1}
  \bar{f}(x,m)=\left\{\begin{array}{rl}
    h_{n_b}(x,m),&~x\in \RR^d\backslash\O ,\\
    f(x,m),&~x\in\O.
  \end{array}\right.
\end{equation}

Our first result is concerned with the critical point of \eqref{eq:free energy-inhom-intro}-\eqref{eq:On-interaction}:
\begin{Theorem}\label{thmcri}
Let $\alpha>7.5$ and $\eta$ be the largest root of equation
\begin{equation*}
  \alpha=\frac{\int_0^1{\mathrm{e}}^{\eta{z^2}}{\mathrm{d}}{z}}{\int_0^1z^2(1-z^2){\mathrm{e}}^{\eta{z^2}}{\mathrm{d}}{z}}.
\end{equation*}
Assume that $g$ satisfies (\ref{assump1}).
For each $\ve>0$, let $f^\ve\in \mathscr{A}$ be the critical point  corresponding to \eqref{eq:free energy-inhom-intro}-\eqref{eq:On-interaction} and $\delta$ is chosen as \eqref{eq:1.20}. If there exists an $\ve$-independent constant $C$ such that,
\begin{equation}\label{push2}
    \frac 1{\ve}\int_\Omega  \(A[f^\ve] -A[h_{n_b}]\) \ud x+\frac \alpha{2\ve }\int_{\RR^d\times\RR^d}\left|Q[\bar{f^\ve}](x)-Q[\bar{f^\ve}](y)\right|^2
g_\ve\(x-y\)\ud x\ud y\leq C,
  \end{equation}
  then, modulo   extraction of a subsequence,
   \begin{equation*}
    f^\ve\rightharpoonup f~\text{ weakly  in}~L^1(\O\times\BS),
  \end{equation*}
  where  $f(x,m)$ is given by
  \begin{equation*}
    f(x,m)=\frac{e^{\eta(m\cdot n(x))^2}}{\int_\BS e^{\eta(m\cdot n(x))^2} \ud m}
  \end{equation*}
  for some weakly harmonic map $n(x)\in H^1(\O;\BS)$ with boundary condition $n|_{\p\O}=\pm n_b|_{\p\O}$.
\end{Theorem}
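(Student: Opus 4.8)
The plan is to extract a limit by weak $L^1$-compactness, identify it as a uniaxial field $h_n$, upgrade $n$ to an $H^1$-map carrying the boundary data, and then pass to the limit in the Euler--Lagrange equation to show $n$ is weakly harmonic. \emph{Compactness and the form of the limit.} Since $\tfrac23-|Q[f]|^2\ge0$ for every probability density $f$ on $\BS$, bound \eqref{push2} forces $\int_\O\int_\BS f^\ve\log f^\ve\,\ud m\,\ud x\le C$, and with $t\log t\ge-\tfrac1e$ this bounds $\int_\O\int_\BS|f^\ve\log f^\ve|$; by the de la Vallée-Poussin criterion $\{f^\ve\}$ is equi-integrable, so Dunford--Pettis gives (along a subsequence) $f^\ve\rightharpoonup f$ in $L^1(\O\times\BS)$ with $f(x,\cdot)$ a probability density a.e. The second term of \eqref{push2} bounds the Gagliardo-type seminorm of $Q[\bar{f^\ve}]$ at scale $\sqrt\ve$ uniformly; since $Q[\bar{f^\ve}]$ is also bounded in $L^\infty$, I would invoke the Bourgain--Brezis--Mironescu compactness theorem to obtain $Q[\bar{f^\ve}]\to\bar Q$ strongly in $L^2_{\mathrm{loc}}(\RR^d)$ with $\bar Q\in H^1(\RR^d)$, hence $Q[f^\ve]\to\bar Q|_\O$ in $L^2(\O)$. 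Because $\alpha>7.5$ and $\eta$ is the prescribed root, the profiles $h_n$ ($n\in\BS$) are exactly the global minimizers of \eqref{eq:1.02}, so \eqref{push2} yields $\int_\O A[f^\ve]\,\ud x\le|\O|A[h_{n_b}]+C\ve$; passing to the limit with the weak lower semicontinuity of the entropy and the \emph{strong} $L^2$-convergence of $|Q[f^\ve]|^2$ gives $\int_\O A[f]\,\ud x\le|\O|\min_{\mathscr{H}}A$, whence $A[f(x,\cdot)]=\min_{\mathscr{H}}A$ a.e., i.e.\ $f(x,m)=h_{n(x)}$ for some $n(x)\in\BS$, and $\bar Q|_\O=s_*(n\otimes n-\tfrac13\II)$ with $s_*>0$ the scalar order parameter of $h_{n_b}$.

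\emph{The director is $H^1$ and carries the boundary data.} As $\bar Q|_\O$ is a uniaxial $Q$-tensor field of constant norm in $H^1(\O)$ and $\O$ is simply connected, the orientability result of Ball--Zarnescu provides $n\in H^1(\O;\BS)$ with $\bar Q|_\O=s_*(n\otimes n-\tfrac13\II)$. Since $\bar{f^\ve}\equiv h_{n_b}$ on $\RR^d\setminus\O$ for all $\ve$, the limit satisfies $\bar Q=s_*(n_b\otimes n_b-\tfrac13\II)$ there, so applying orientability on $\RR^d$ I would get a global lift $\bar n\in H^1(\RR^d;\BS)$ equal to $\pm n$ on $\O$ and to $\pm n_b$ outside $\O$ --- which is exactly the sense of $n|_{\p\O}=\pm n_b|_{\p\O}$.

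\emph{The director is weakly harmonic.} For $W\in C_c^\infty(\O\setminus\overline{\O^\delta};\mathfrak{so}(3))$ I would test criticality against the rotational variation $f^\ve_t(x,m):=f^\ve(x,\mathrm{e}^{-tW(x)}m)$, which lies in $\mathscr{A}$ (mass and the $Q$-constraint on $\O^\delta$ are preserved). The entropy is \emph{exactly} invariant under it, so $\tfrac{\ud}{\ud t}\big|_0 A_\ve[f^\ve_t]=0$ collapses --- after rotating the angular variables, computing the second moments of $f^\ve$, and using $W^\top=-W$ --- to the nonlocal identity
\begin{equation*}
  \int_\O\int_\O g_\ve(x-x')\,W(x):\big[Q[\bar{f^\ve}](x),Q[\bar{f^\ve}](x')\big]\,\ud x'\,\ud x=0,
\end{equation*}
with $[\cdot,\cdot]$ the matrix commutator (one may work with $Q[\bar{f^\ve}]$ since $\operatorname{dist}(\operatorname{supp}W,\p\O)\gg\sqrt\ve$). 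Writing $Q^\ve:=Q[\bar{f^\ve}]$ and $[Q^\ve(x),Q^\ve(x')]=[Q^\ve(x),Q^\ve(x')-Q^\ve(x)]$, symmetrizing in $x\leftrightarrow x'$ to bring out $W(x)-W(x')=-\na W(x)(x'-x)+O(|x'-x|^2)$, the quadratic remainder contributes $O(\ve^{3/2})$ by Cauchy--Schwarz against $\int\!\!\int g_\ve|Q^\ve(x)-Q^\ve(x')|^2\le C\ve$, while the principal part, after $x'=x+\sqrt\ve z$, equals
\begin{equation*}
  -\tfrac\ve2\int_{\RR^d}g(z)\int_\O\big(\na W(x)z\big):\Big[Q^\ve(x),\tfrac{Q^\ve(x+\sqrt\ve z)-Q^\ve(x)}{\sqrt\ve}\Big]\,\ud x\,\ud z.
\end{equation*}
For fixed $z$ the difference quotient converges weakly in $L^2_{\mathrm{loc}}$ to $\na\bar Q\cdot z$ (with $\int g(z)\|\cdot\|_{L^2}^2\,\ud z\le C$ by \eqref{push2}), while $(\na W(x)z):[Q^\ve(x),\cdot]$ converges \emph{strongly}; a Vitali argument in $z$ (exponential decay of $g$, the $L^2(g\,\ud z)$-bound on the quotients) passes to the limit, and $\int g(z)z_cz_d\,\ud z=c_g\delta_{cd}$ with $c_g>0$. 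Dividing by $\ve$ then gives
\begin{equation*}
  \sum_c\int_\O\pa_c W(x):\big[\bar Q(x),\pa_c\bar Q(x)\big]\,\ud x=0\qquad\text{for all }W\in C_c^\infty(\O;\mathfrak{so}(3)),
\end{equation*}
admissibility being restored as $\delta\to0$. With $\bar Q|_\O=s_*(n\otimes n-\tfrac13\II)$, $|n|=1$, one has $[\bar Q,\pa_c\bar Q]=s_*^2(n\otimes\pa_c n-\pa_c n\otimes n)$, so this is the weak form of $\sum_c\pa_c(n\wedge\pa_c n)=0$; as $s_*\ne0$, this means $n$ is a weakly harmonic map into $\BS$. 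The scalar equation for $\eta$ in the statement is the self-consistency $\eta=2\alpha s_*(\eta)$, obtained by inserting $f=h_{n(x)}$ into the limiting Euler--Lagrange relation $f^\ve=\mathrm{e}^{-2\CU_\ve[f^\ve]}/\!\int_\BS\mathrm{e}^{-2\CU_\ve[f^\ve]}$, and it is what makes the limit carry the prescribed $\eta$.

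\emph{Main difficulty.} The heart of the matter is the last step: since $g$ is even the naive leading term of the nonlocal Euler--Lagrange identity vanishes, so the harmonic-map equation lives at the next order; weakly harmonic maps are not $H^2$, so one cannot expand $Q$ to second order and must run the entire expansion at the level of first-order difference quotients --- forcing the $x\leftrightarrow x'$ symmetrization and a careful integration in $z$ --- while arranging that every product term pairs the strongly convergent factor $Q^\ve$ (from BBM compactness) with the only weakly convergent one, $\na\bar Q$.
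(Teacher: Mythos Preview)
Your overall architecture is sound and genuinely different from the paper's. The paper never works with rotational variations or real-space difference quotients; instead it introduces a Fourier-side square-root operator $\mathcal{T}_\ve$ with symbol $\xi\sqrt{(1-\hat g(\sqrt\ve\xi))/(\ve|\xi|^2)}$, so that $\mathcal{A}_\ve=\sum_k\mathcal{T}^k_\ve\mathcal{T}^k_\ve$, and rewrites $\int\varphi\cdot(Q^i\wedge\mathcal{A}_\ve Q^i)=\int[\mathcal{T}^k_\ve,\varphi]\cdot(Q^i\wedge\mathcal{T}^k_\ve Q^i)$ (the diagonal term $\mathcal{T}^k_\ve Q^i\wedge\mathcal{T}^k_\ve Q^i$ vanishes). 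The commutator $[\mathcal{T}^k_\ve,\varphi]$ is shown to be bounded on $L^2$ uniformly in $\ve$ (Lipschitz symbol) and to converge \emph{strongly}, which pairs cleanly with the weak $L^2$ convergence $\mathcal{T}_\ve Q[\bar f^\ve]\rightharpoonup-i\sqrt{\mu/d}\,\nabla Q[\bar f]$. Your symmetrization in $x\leftrightarrow x'$ plays the same structural role as the paper's commutator trick---both exploit $[Q,Q]=0$ to kill the leading term---but the paper's Fourier route packages the weak/strong pairing more cleanly and uses assumption~\eqref{assump1} directly, while your route is more elementary and avoids Fourier analysis entirely.

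There is, however, a real gap in your limit passage. You assert that for \emph{fixed} $z$ the difference quotient $D_z^\ve Q^\ve:=(Q^\ve(\cdot+\sqrt\ve z)-Q^\ve)/\sqrt\ve$ converges weakly in $L^2_{\mathrm{loc}}$; but you only control $\int g(z)\|D_z^\ve Q^\ve\|_{L^2}^2\,\ud z$, not $\|D_z^\ve Q^\ve\|_{L^2}$ for individual $z$, so there is no weak $L^2$ limit to speak of, and ``Vitali in $z$'' does not supply one. The fix is to split $Q^\ve=\bar Q+r^\ve$ with $r^\ve\to0$ in $L^2$: the term with $D_z^\ve\bar Q$ is handled by dominated convergence since $\bar Q\in H^1$ gives $\|D_z^\ve\bar Q\|_{L^2}\le|z|\|\nabla\bar Q\|_{L^2}$; the terms containing $r^\ve$ paired with $D_z^\ve Q^\ve$ are controlled by Cauchy--Schwarz in $(x,z)$ against the energy bound; and the term with $D_z^\ve r^\ve$ paired with $\bar Q$ is handled by discrete integration by parts, throwing $D_{-z}^\ve$ onto $(\nabla W\,z)\bar Q\in H^1$ and pairing the result with $r^\ve\to0$. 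A smaller issue: your boundary argument via orientability on all of $\RR^d$ does not apply as stated, because $n_b$ is not unit-length outside $\O$ (it is a Sobolev extension cut off at infinity), so $Q[h_{n_b}]$ is not uniaxial with constant order parameter there. The paper instead uses the trace: $g_\ve*Q[\bar f^\ve]\rightharpoonup Q[\bar f]$ in $H^{1/2}(\p\O)$ together with the pointwise identification $g_\ve*Q[\bar f^\ve]\to Q[h_{n_b}]$ on $\p\O$ (from the boundary-layer structure and exponential decay of $g$) forces $Q[\bar f]|_{\p\O}=Q[h_{n_b}]|_{\p\O}$, hence $n|_{\p\O}=\pm n_b|_{\p\O}$.
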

\begin{Remark}
  Weakly harmonic map into $\BS$ is defined to be the weak solution to the Euler-Lagrange equation for \eqref{eq:harmonic}. More precisely, it satisfies
\begin{equation*}
 \sum_{j=1}^d\int_{\O}\p_j \varphi(x)\cdot(n(x)\wedge\p_j n(x))\ud x=0
\end{equation*}
for any $\varphi\in C_0^\infty(\O;\RR^3)$.
\end{Remark}
The conclusion in Theorem 1 can be strengthened if we assume  $f^\ve$ to be the solutions  of \eqref{globalmini2}:
\begin{Theorem}\label{thmglo}
Let $\alpha, \eta, g$  be the same as in Theorem 1.
For each $\ve>0$,  the minimizing problem \eqref{globalmini2} has a solution  $f^\ve \in \mathscr{A}$ which satisfies \eqref{push2} if additionally $\delta$  satisfies \eqref{eq:1.20}. Moreover, if $\delta$ is chosen as \eqref{eq:1.20} and $n_b|_{\O}\in H^1(\O;\BS)$ is a minimizing harmonic map\footnote{In this case, the boundary condition is prescribed by $n_b|_{\p\O}$.}, then  modulo  extraction of a subsequence,
\begin{equation*}
    f^\ve\rightharpoonup f~\text{ weakly  in}~L^1(\O\times\BS),
\end{equation*}
where  $f(x,m)$ is given by
\begin{equation*}
    f(x,m)=\frac{e^{\eta(m\cdot n(x))^2}}{\int_\BS e^{\eta(m\cdot n(x))^2} \ud m}
\end{equation*}
for some minimizing harmonic map $n(x)\in H^1(\O;\BS)$ with boundary condition $$n|_{\p\O}=\pm n_b|_{\p\O}.$$
\end{Theorem}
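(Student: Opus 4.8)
The plan is to build the minimizer directly by the direct method in the calculus of variations, then to invoke Theorem 1 to pass to the limit $\ve\to0$. First I would fix $\ve>0$ and take a minimizing sequence $\{f_k\}\subset\mathscr{A}$ for $A_\ve$. The admissible class is nonempty because $h_{n_b}\in\mathscr{A}$ (its $Q$-tensor equals $Q[h_{n_b}]$ everywhere by construction), so $\inf_{\mathscr{A}}A_\ve<\infty$. The key compactness point is that the entropy term $\int f\log f$ controls an $L\log L$ norm, and combined with the mass constraint $\|f(\cdot,x)\|_{L^1(\BS)}=1$ this gives, via the Dunford--Pettis theorem, weak $L^1(\O\times\BS)$ relative compactness of $\{f_k\}$; extract $f_k\rightharpoonup f^\ve$. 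Then: (i) the constraint set $\mathscr{A}$ is weakly closed, since $Q[\cdot]$ is linear and bounded on $L^1$, so the constraint $Q[f^\ve]=Q[h_{n_b}]$ on $\O^\delta$ passes to the limit, and likewise the pointwise mass normalization survives because it is a linear constraint tested against $L^\infty$ functions of $x$; (ii) $f\mapsto\int f\log f$ is convex and strongly lower semicontinuous, hence weakly l.s.c.; (iii) the interaction term is (weakly) continuous: writing it through $Q$ as in \eqref{eq:1.02}--type manipulations, $\int_\O\int_\BS f\,\CU_\ve[f]\,\ud m\,\ud x$ is a bounded quadratic form in $Q[\bar f]$ convolved against $g_\ve$, and one checks it is continuous (indeed l.s.c. suffices) under $f_k\rightharpoonup f^\ve$ using that $Q[\bar f_k]\to Q[\bar f^\ve]$ and the smoothing by $g_\ve$. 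Combining (ii) and (iii), $A_\ve[f^\ve]\le\liminf A_\ve[f_k]=\inf_{\mathscr{A}}A_\ve$, so $f^\ve$ is a minimizer.

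Next I must verify that this minimizer satisfies the energy bound \eqref{push2}. This is the crucial quantitative step and I expect it to be the main obstacle. The idea is to use $\bar f^\ve$ (the extension by $h_{n_b}$ outside $\O$) as a competitor comparison: since $n_b\in H^1(\RR^d;\RR^3)$ has compact support, the map $x\mapsto h_{n_b}(x,\cdot)$ provides a natural trial density, and the minimality of $f^\ve$ against $h_{n_b}$ (restricted to $\O$, which lies in $\mathscr{A}$) yields $A_\ve[f^\ve]\le A_\ve[h_{n_b}]$. Rewriting both sides via the splitting $A_\ve[f]=\int_\O A[f]\,\ud x+(\text{nonlocal correction involving differences of }Q)$ — this is where the algebraic identity analogous to \eqref{eq:1.02} is used, turning the double convolution integral of $B$ into $\frac{\alpha}{2}\int\!\int|Q[\bar f](x)-Q[\bar f](y)|^2 g_\ve(x-y)$ plus lower-order terms — one isolates exactly the left side of \eqref{push2}. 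The bound $A_\ve[h_{n_b}]-\int_\O A[h_{n_b}]=O(\ve)$ must then be established from the regularity of $n_b$: the nonlocal term for $h_{n_b}$ is $\frac{\alpha}{2}\int\!\int|Q[h_{n_b}](x)-Q[h_{n_b}](y)|^2 g_\ve(x-y)\,\ud x\,\ud y$, and since $Q[h_{n_b}]$ is a smooth function of $n_b\in H^1$, a standard convolution-difference estimate ($\int\!\int|F(x)-F(y)|^2 g_\ve(x-y)\le C\ve\|\na F\|_{L^2}^2$ for $g$ with the stated decay and unit mass) gives the $O(\ve)$ bound. Dividing by $\ve$ produces \eqref{push2} with $C$ depending on $\alpha$, $\eta$, $\|n_b\|_{H^1}$ and $g$ but not on $\ve$.

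Finally, with \eqref{push2} in hand, $f^\ve$ is in particular a critical point of $A_\ve$ satisfying the hypothesis of Theorem 1, so modulo a subsequence $f^\ve\rightharpoonup f$ weakly in $L^1(\O\times\BS)$ with $f(x,m)=e^{\eta(m\cdot n(x))^2}/\int_\BS e^{\eta(m\cdot n(x))^2}\,\ud m$ for a weakly harmonic map $n\in H^1(\O;\BS)$ with $n|_{\p\O}=\pm n_b|_{\p\O}$. To upgrade ``weakly harmonic'' to ``minimizing harmonic map'' under the extra assumption that $n_b|_\O$ is itself minimizing, I would run a $\Gamma$-convergence / lim-inf–lim-sup argument: the lim-inf inequality $\liminf_{\ve\to0}\frac{1}{\ve}(A_\ve[f^\ve]-|\O|\,A[h_{n_b}])\ge k_F E_{OF}[n]$ (with $k_F$ the effective constant produced by the expansion of the nonlocal term around the uniaxial state, matching the constant appearing implicitly in Theorem 1) combined with the recovery sequence built from $h_{n_b}$, which by the same $O(\ve)$ estimate as above satisfies $\limsup_{\ve\to0}\frac{1}{\ve}(A_\ve[h_{n_b}]-|\O|\,A[h_{n_b}])\le k_F E_{OF}[n_b]$. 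Chaining the two with the minimality of $f^\ve$ forces $E_{OF}[n]\le E_{OF}[n_b]\le E_{OF}[\tilde n]$ for every competitor $\tilde n$ with the same boundary data, i.e. $n$ is a minimizing harmonic map. The delicate point here is ensuring the energy expansion constant in the lim-inf bound is sharp — equivalently, that no energy is lost to the angular (non-uniaxial) degrees of freedom in the limit — which relies on the spectral gap / strict convexity of the homogeneous Maier--Saupe energy at $h_{n_b}$ that the choice of $\eta$ in the hypotheses of Theorem 1 guarantees; I expect this coercivity estimate, already encoded in the proof of Theorem 1, to be exactly what makes the argument close.
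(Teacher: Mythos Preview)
Your proposal is essentially correct and follows the same outline as the paper: existence via the direct method, the energy bound \eqref{push2} from comparison with the competitor $h_{n_b}$, compactness and uniaxiality of the limit, and finally an energy inequality forcing $\|\nabla n\|_{L^2(\O)}\le\|\nabla n_b\|_{L^2(\O)}$. Two places are worth noting where the paper is more direct than your sketch.

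First, the paper does not invoke Theorem~\ref{thmcri}. It uses only the compactness result (Proposition~\ref{compactness1}), which requires \eqref{push2} but not the Euler--Lagrange equation, to obtain the weak $L^1$ limit, its uniaxial form, and $n\in H^1(\O;\BS)$; the boundary condition is then recovered by the same trace argument as in the proof of Theorem~\ref{thmcri}. The intermediate conclusion ``$n$ is weakly harmonic'' is never used.

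Second, and more substantively, the upgrade to ``minimizing'' requires neither a spectral gap nor any sharp-constant coercivity estimate. The paper rewrites the comparison $A_\ve[f^\ve]\le A_\ve[h_{n_b}]$ (after handling the boundary-layer corrections coming from the passage $*_\O\to *$, which is where the choice \eqref{eq:1.20} of $\delta$ and the exponential decay of $g$ enter) as
\[
\tfrac{2}{\ve}\int_\O\big(A[f^\ve]-A[h_{n_b}]\big)\,\ud x+\alpha\|\mathcal{T}_\ve Q[\bar f^\ve]\|_{L^2(\RR^d)}^2\le\alpha\|\mathcal{T}_\ve Q[h_{n_b}]\|_{L^2(\RR^d)}^2+O(\sqrt\ve),
\]
where $\mathcal{T}_\ve$ is the Fourier multiplier with symbol $\xi\sqrt{(1-\hat g(\sqrt\ve\xi))/(\ve|\xi|^2)}$. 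One simply \emph{drops} the first term on the left (it is nonnegative since $h_{n_b}$ is a pointwise minimizer of the homogeneous Maier--Saupe energy), then uses weak lower semicontinuity of the $L^2$ norm together with the weak convergence $\mathcal{T}_\ve Q[\bar f^\ve]\rightharpoonup -i\sqrt{\mu/d}\,\nabla Q[\bar f]$ and the strong convergence $\mathcal{T}_\ve Q[h_{n_b}]\to -i\sqrt{\mu/d}\,\nabla Q[h_{n_b}]$ to obtain $\|\nabla Q[\bar f]\|_{L^2(\RR^d)}\le\|\nabla Q[h_{n_b}]\|_{L^2(\RR^d)}$. Subtracting the common contribution on $\O^c$ and translating via $Q=s_2(n\otimes n-\tfrac13\mathbb{I}_3)$ gives the desired Dirichlet-energy inequality. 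Your concern that ``energy might be lost to the non-uniaxial degrees of freedom'' is precisely what is absorbed into the discarded nonnegative term; no quantitative control of it is needed, and the constant matches automatically because the same multiplier $\mathcal{T}_\ve$ appears on both sides.
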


In contrast to the Ginzburg-Landau energy or the Landau-De Gennes energy, the Onsager's energy \eqref{eq:free energy-inhom-intro} is non-local by nature and its global minimizers do not possess nature energy estimate that is independent of $\ve$. However, by incorporating in it with
the strong anchoring boundary condition \eqref{admissible}, we prove that the global minimizers $f_\ve$  satisfies \eqref{push2}.
On the other hand, in order to recover the Oseen-Frank energy in the $\ve$-limit, we shall work with macroscopic parameter $Q[f_\ve]$ instead of the number density $f_\ve$ itself for the later  might not possess compactness.
The key step to obtain  the strong compactness of $Q[f_\ve]$ is to deduce from \eqref{push2} that, $g_\ve*Q[\bar{f}^\ve]$ has a uniform in $\ve$ bound in $H^1(\RR^d)$. As the reader shall see, during the process of passing to the $\ve$-limit, the $Q$-tensor will serve as an intermediate parameter connecting the number density function in molecule theory and the unit vector field in vector theory.

The rest of the article  is organized as follows. In
Section \ref{prelimi}, we  introduce the   notations and conventions that will be adopted throughout this work. In the meanwhile, some  assumptions will be made, for example, on  the kernel function $g$ in \eqref{eq:On-interaction} and the parameter $\eta$ in \eqref{eq:1.01}. Moreover, several analytic results concerning Maier-Saupe mean-field theory, especially the isotropic-nematic phase transition will be discussed. Section \ref{sec5} serves as a preparation for the proof of Theorem \ref{thmcri} where the Euler-Lagrange equation of \eqref{eq:free energy-inhom-intro} is derived and is recast  in terms of the macroscopic variable $Q[f]$. Moreover, in Proposition \ref{compactness1}, various weak and strong convergence results are obtained based on the a priori estimate \eqref{push2}. Section \ref{sec6} is devoted to the proof of Theorem \ref{thmcri}. The proof of Theorem \ref{thmglo} will be given in Section \ref{sec3} by first proving that \eqref{push2} is fulfilled by the solutions to \eqref{globalmini2}.

\section{Preliminary}\label{prelimi}
\subsection{Notation}
In this work, $\O\subset\RR^d$ will be a {\it{simply-connected}} domain which is decomposed as union $\O^\delta\cup \O_\delta$ where
   $$\O^\delta:=\{x\in\O~|~\operatorname{dist}(x,\p\O)\leq  \delta\},\quad \O_\delta:=\{x\in\O~|~\operatorname{dist}(x,\p\O)\geq  \delta\}.$$

 For any $k\times k$ symmetric matrix $M=\{M_{ij}\}_{1\leq i,j\leq k}$,  the $j$-th row vector will be denoted by   $M^j=\{M_{ij}\}_{1\leq i\leq k}$. For two such matrix $M$ and $N$, their inner product will be defined via, under Einstein summation convention,
 $M:N=M_{ij}N_{ij}$
and this induce the norm $|M|=\sqrt{M:M}$. When $i$ appears as superscript or subscript, it denotes an integer. On the other hand, we shall also use it to denote $\sqrt{-1}$ when it is multiplied by some quantities.

 We shall now introduce some formulas related to the rotational gradient operator, which will be employed to derive the Euler-Lagrange equation for \eqref{eq:free energy-inhom-intro}.
Let $ m =(m_1, m_2, m_3)^T\in \BS$ and $\nabla_ m $ be the gradient operator on the
unit sphere $\BS$. The rotational gradient operator $\mathcal{R}$ is defined
by
$$\mathcal{R}= m \wedge\nabla_{ m }.$$
Under spherical coordinate $(\theta, \phi)$, it can be written as
\begin{align*}
\CR=&(-\sin\phi\mathbf{i}+\cos\phi\mathbf{j})\partial_\theta
-(\cos\theta\cos\phi\mathbf{i}+\cos\theta\sin\phi\mathbf{j}-\sin\theta\mathbf{k})
\frac{1}{\sin\theta}\partial_\phi\\
\eqdefa&\ii{\CR}_1+\jj{\CR_2}+\kk{\CR_3}.
\end{align*}
The following properties can be easily verified, see for instance \cite{MR3366748}:
\begin{equation}\label{rotational}
 \begin{split}
   & \mathcal{R}\cdot\mathcal{R}=\Delta_\BS,\quad   \mathcal{R}_jm_k=-\epsilon^{jk\ell}m_\ell,\\
  & \mathcal{R}( m \cdot u)= m \wedge u,\quad
\mathcal{R}\cdot( m \wedge u)=-2 m \cdot u ,\\
 &\int_{\BS}\mathcal{R} f_1 f_2 \ud m =-\int_{\BS}f_1\mathcal{R} f_2 \ud m.
 \end{split}
\end{equation}
 Here $\Delta_\BS$ is the Laplace-Beltrami operator on $\BS$ and
$\varepsilon^{ jk\ell}$ is the Levi-Civita symbol, which is convenient in dealing with computation related to wedge product: for any $a,b\in\mathbb{R}^3$, one has
\begin{equation}\label{eq:1.12}
  a\wedge b=\{a_kb_\ell\varepsilon^{ jk\ell}\}_{1\leq j\leq 3}.
\end{equation}

\subsection{Convolution Operator and Interaction Kernel}
   We shall use the notation  $*_\O$ to denote the non-commutative convolution of $u,v$ in a domain $\O$:
\begin{equation*}
  u*_\O v: =\int_{\O}u(x')v(x-x')\ud x'.
\end{equation*}
Note that $u*_\O v=v*_\O u$ does not hold in general.
On the other hand,, we shall use   $*$ to denote the convolution of two functions  $u$ and $v$ in $\RR^d$:
\begin{equation*}
  u*v:=\int_{\RR^d}u(x-x')v(x')\ud x'=\int_{\RR^d}u(x')v(x-x')\ud x'.
\end{equation*}

For any function $v\in L^1(\RR^d)\cap L^2(\RR^d)$,  $\hat{v}$ will denote its  Fourier transform:
\begin{equation*}
  \hat{v}(\xi)=\int_{\RR^d}v(x)e^{-2\pi i x\cdot \xi}\ud x.
\end{equation*}
Then we have the following formulas (see \cite{SteinShakarchi2003})
\begin{equation*}
  \begin{split}
  &\hat{v}(x)=\int_{\RR^d}\hat{v}(\xi)e^{2\pi i x\cdot \xi}\ud \xi,\\
    &\widehat{\nabla u}(\xi)=2\pi i\xi\hat{u}(\xi),\\
    &\widehat{uv}(\xi)=\hat{u}(\xi)*\hat{v}(\xi),\\
    &\widehat{u*v}=\hat{u}(\xi)\hat{v}(\xi),\\
    &\|u\|_{L^2(\RR^d)}=\|\hat{u}\|_{L^2(\RR^d)}.
  \end{split}
\end{equation*}

Now we turn to the assumptions on the kernel function $g$ in \eqref{eq:On-interaction}.
We shall assume  $g(x)$ be a non-negative, smooth, {\bf radial function with exponential decay} and satisfies
$\int_{\RR^d}g(x) =1$. Moreover, we assume there exists some $c_0>0$ such that
\begin{equation}\label{assump1}
0\le\hat{g}(\xi)\le1~\text{and}~ c_0 |\xi|^2\hat{g}^2(\xi)\leq 1-\hat{g}(\xi) ,~\forall\xi\in\RR^d.
\end{equation}
 Under these assumptions, one can easily verify that
\begin{equation}\label{ftrans1}
  \hat{g}(0)=1 ,\quad \nabla\hat{g}(0)=0,\quad \nabla^2\hat{g}(0)=-\tfrac {4\pi^2\mu} d\mathbb{I}_d~\text{where}~\mu=\int_{\RR^d}|x|^2g(x)\ud x.
\end{equation}
Example of   $g$ satisfying the above assumptions is given by
$$g(x)=\(\tfrac a\pi\)^{\frac d2}e^{-a |x|^2}~\text{with}~a\in(0,\pi).$$
Actually, as $\hat{g}(\xi)=e^{-\tfrac{\pi^2 |\xi|^2}a}$, \eqref{assump1} holds with $c_0\leq \frac{\pi^2}a$.

We shall re-scale $g$ by $g_\ve=\frac 1{\sqrt{\ve}^d}g(\frac x{\sqrt{\ve}})$. Thus, we have
\begin{equation*}
\int_{\RR^d} g_\ve(x)\ud x=1,~\hat{g}_\ve(\xi)=\hat{g}(\sqrt{\ve}\xi).
\end{equation*}

  We give a few more words about the boundary condition introduced in \eqref{eq:boundary}. Any $ n_b\in H^1(\O;\BS)$  can be extend  to be
\begin{equation}\label{bound4}
\left\{
\begin{array}{rl}
 &n_b\in H^1(\RR^d;\RR^3),~\text{with}~n_b(x)\equiv 0~\text{for}~|x|\geq R~\text{where}~\O\subset B_{\frac R4}(0),\\
   &\|n_b\|_{W^1_p(\RR^d;\RR^3)}\leq C\|n_b\|_{H^1(\O)}~\text{with}~1\leq p\leq 2.
\end{array}
\right.
  \end{equation}
The existence of $n_b$  can be justified by first employing the extension theorem of Sobolev space (see for instance \cite[page 181]{MR0290095}) and then multipling by  a cutoff function
\begin{equation*}
  \chi(x)=\left\{
  \begin{array}{rl}
    1,&\quad |x|\leq  \frac R2 ,\\
    0,&\quad |x|\geq R.
  \end{array}
  \right.
\end{equation*}

\subsection{The Maier-Saupe energy}
Now we turn to the study of  the minimizers of the Maier-Saupe energy \eqref{Intro-energy}. It is proved
in \cite{fatkullin2005critical,MR2164198} that, all the critical points of \eqref{Intro-energy} can be explicitly given by
 \begin{equation}\label{uniaxial}
   h_{\nu}:=\frac{e^{\eta(m\cdot \nu)^2}}{\int_\BS e^{\eta(m\cdot \nu)^2} \ud m}
 \end{equation}
 for any $\nu\in\BS$
 where $\eta$ is a constant depending on the interaction strength $\alpha$:
\begin{align}\label{eta-alpha}
\frac{3{\mathrm{e}}^\eta}{\int_0^1{\mathrm{e}}^{\eta{z^2}}{\mathrm{d}}{z}}=3+2\eta+\frac{4\eta^2}{\alpha}.
\end{align}
The trivial solution   $\eta=0$ to \eqref{eta-alpha}  corresponds to the isotropic  distribution $h\equiv \frac{1}{4\pi}$.
If $\alpha>\alpha^*=\min_{\eta\in\mathbb{R}} \alpha(\eta)\approx6.7314$, then  \eqref{eta-alpha}  has nontrivial solutions satisfying
\begin{equation}\label{eq:1.10}
  \alpha=\alpha(\eta):=\frac{\int_0^1{\mathrm{e}}^{\eta{z^2}}{\mathrm{d}}{z}}{\int_0^1z^2(1-z^2){\mathrm{e}}^{\eta{z^2}}{\mathrm{d}}{z}}.
\end{equation}
In \cite{MR3366748}, it is proved that there exists a unique $\eta^*$ such that $\alpha(\eta^*)=\alpha^*$, and $\alpha(\eta)$ increases monotonically
when $\eta>\eta^*$ and decreases monotonically when $\eta<\eta^*$. Thus, for $\alpha>\alpha^*$, there exists two values, denoted by
$\eta_1(\alpha)$ and $\eta_2(\alpha)$, such that $\eta_1>\eta^*>\eta_2$. In addition, $\eta_1(\alpha)$ is an increasing function of $\alpha$, while $\eta_2(\alpha)$ is a decreasing function. It is also proved that, for $\alpha<7.5$, the critical point corresponding to $\eta=0$ is stable while for $\alpha>7.5$ it is unstable. For $\alpha>\alpha^*$, the critical points corresponding to $\eta_1$ are always stable and the ones corresponding to $\eta_2$ is unstable.

\begin{Lemma}\label{axisymmetric}
For $\alpha>7.5$, the global minimizer of \eqref{Intro-energy} in the function space $$\mathscr{H}:=\left\{f\in L^1(\BS)~|~f\geq 0~\text{a.e. on}~\BS,~\|f\|_{L^1(\BS)}=1\right\}$$
is achieved only by the uniaxial distribution \eqref{uniaxial} with  $\eta=\eta_1(\alpha)\neq 0$.
 \end{Lemma}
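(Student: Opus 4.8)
## Proof proposal for Lemma 2.1

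The plan is to reduce the infinite-dimensional variational problem over $\mathscr H$ to a finite-dimensional comparison among the explicitly known critical points \eqref{uniaxial}, and then select the minimizer among them. First I would recall that, by \eqref{eq:1.02}, the Maier--Saupe energy can be written as
\begin{equation*}
  A[f]=\int_\BS f\log f\,\ud m+\alpha\Big(\tfrac23-|Q[f]|^2\Big),
\end{equation*}
so that a minimizing sequence $\{f_k\}\subset\mathscr H$ has uniformly bounded entropy $\int_\BS f_k\log f_k\,\ud m$ (the $\alpha(\tfrac23-|Q|^2)$ term is bounded since $|Q[f]|\le C$ for $f\in\mathscr H$). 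The entropy bound gives equi-integrability of $\{f_k\}$ via the de la Vall\'ee--Poussin criterion, hence (Dunford--Pettis) weak $L^1(\BS)$ compactness; along a subsequence $f_k\rightharpoonup f_*$ in $L^1$, which preserves $f_*\ge0$ and $\|f_*\|_{L^1}=1$, and $Q[f_k]\to Q[f_*]$ since $m\otimes m-\tfrac13\II_3$ is a bounded continuous test function. Lower semicontinuity of the convex entropy functional under weak $L^1$ convergence then yields $A[f_*]\le\liminf A[f_k]$, so a global minimizer $f_*\in\mathscr H$ exists.

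Next I would argue that any global minimizer is a critical point and therefore is of the form \eqref{uniaxial}. Since $f_*$ minimizes and the admissible class is the simplex of probability densities, the first variation must vanish on admissible perturbations; because the entropy blows up as $f\to0$ the minimizer is strictly positive a.e., so one obtains the standard Euler--Lagrange equation $\log f_*+1+2\,\CU[f_*]=\text{const}$, i.e. $f_*=Z^{-1}\exp\!\big(-2\alpha\int_\BS|m\wedge m'|^2 f_*(m')\,\ud m'\big)$. Using $|m\wedge m'|^2=1-(m\cdot m')^2$ and the definition \eqref{eq:1.05} of $Q$, the exponent is (up to an additive constant absorbed into $Z$) a quadratic form $2\alpha\, m^T Q[f_*] m$ in $m$; diagonalizing the symmetric traceless matrix $Q[f_*]$ and invoking the classification of critical points from \cite{fatkullin2005critical,MR2164198} (all critical points of \eqref{Intro-energy} are the axially symmetric $h_\nu$ of \eqref{uniaxial} with $\eta$ solving \eqref{eta-alpha}), we conclude $f_*=h_\nu$ for some $\nu\in\BS$ and some root $\eta$ of \eqref{eta-alpha}.

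It then remains to decide which root $\eta$ gives the smallest energy. Since $A[h_\nu]$ is independent of $\nu\in\BS$ by rotational invariance, this is now a comparison over the finite set $\{\eta=0,\ \eta=\eta_1(\alpha),\ \eta=\eta_2(\alpha)\}$ (for $\alpha>\alpha^*$). I would use the stability information quoted from \cite{MR3366748}: for $\alpha>7.5$ the isotropic critical point $\eta=0$ is unstable, and the branch $\eta_2$ is always unstable, so neither can be the global minimizer; only $\eta_1(\alpha)$ remains, and since a global minimizer exists by the first step, it must be $h_\nu$ with $\eta=\eta_1(\alpha)\neq0$. Uniqueness ``up to the choice of $\nu$'' is exactly the statement; to finish, I would note there is no further multiplicity because $\eta_1(\alpha)$ is the unique stable nonzero root. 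The main obstacle I anticipate is not the compactness/existence part (which is routine once \eqref{eq:1.02} is in hand) but making the energy comparison fully rigorous: one should either cite an explicit computation of $A[h_{\eta_1}]<A[h_0]$ and $A[h_{\eta_1}]<A[h_{\eta_2}]$ from \cite{fatkullin2005critical,MR2164198,MR3366748}, or reproduce the reduction of $\eta\mapsto A[h_{\eta}]$ to a one-variable function via \eqref{eta-alpha} and check its values at the three roots; care is needed because $\alpha=7.5$ is precisely the threshold where $\eta=0$ switches stability, so the strict inequality $\alpha>7.5$ must be used, not merely $\alpha>\alpha^*$.
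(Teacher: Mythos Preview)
Your proposal is correct and follows essentially the same route as the paper: entropy bound from \eqref{eq:1.02} gives weak $L^1$ compactness and lower semicontinuity (the paper packages this as Lemma~\ref{convexlem}), the Euler--Lagrange equation identifies any minimizer as a critical point, and the classification and stability results quoted from \cite{fatkullin2005critical,MR2164198,MR3366748} single out $\eta=\eta_1(\alpha)$. The paper is in fact terser than you on the final comparison step, simply writing ``this together with the discussion before leads to the desired result,'' so your explicit appeal to the instability of $\eta=0$ (for $\alpha>7.5$) and of $\eta_2$ matches and slightly expands on what the authors do.
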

 The proof will make use of the following classical result concerning   the weakly lower-semicontinuity of   entropy:
\begin{Lemma}\label{convexlem}
  Let $f_k\in\mathscr{H}(\O)$ (defined by \eqref{eq:1.09}) be a sequence of functions such that
   \begin{equation*}
     \int_{\Omega\times\BS}f_k
\log f_k~ <\infty~\text{uniformly for}~k\in\mathbb{N}^*.
   \end{equation*}
    Then   there exists $f\in \mathscr{H}(\O)$ such that $f_k\rightharpoonup f$ weakly in $L^1(\BS\times\Omega)$ and
  \begin{equation}\label{convexlimit}
  \int_{\Omega\times\BS} f\log f\ud x\ud m
\leq\liminf_{k \to \infty}\int_{\Omega\times\BS}f_k\log f_k\ud x\ud m.
  \end{equation}
\end{Lemma}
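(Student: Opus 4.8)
The plan is to combine a weak-compactness argument in $L^1$, based on the de la Vall\'ee--Poussin criterion, with the classical weak lower semicontinuity of convex integral functionals; all ingredients are standard, so I only describe the structure. For the compactness, I would first note that $\O$ is bounded, so $|\O\times\BS|<\infty$, and that $t\log t\ge -e^{-1}$ for every $t\ge0$; hence $\Phi(t):=t\log t+e^{-1}$ is nonnegative and convex on $[0,\infty)$ with $\Phi(t)/t\to+\infty$ as $t\to+\infty$. The hypothesis gives $\sup_k\int_{\O\times\BS}\Phi(f_k)\,\ud x\,\ud m<\infty$, so the family $\{f_k\}$ is uniformly integrable on $\O\times\BS$; together with the uniform bound $\|f_k\|_{L^1(\O\times\BS)}=|\O|$, obtained by integrating $\|f_k(\cdot,x)\|_{L^1(\BS)}=1$ over $x\in\O$, the Dunford--Pettis theorem yields a subsequence, not relabeled, and some $f\in L^1(\O\times\BS)$ with $f_k\rightharpoonup f$ weakly in $L^1(\O\times\BS)$.

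Next I would verify $f\in\mathscr{H}(\O)$: testing the weak convergence against nonnegative $\phi\in L^\infty(\O\times\BS)$ gives $\int f\phi\ge0$, so $f\ge0$ a.e., and testing against $\psi=\psi(x)\in L^\infty(\O)$ while using $\int_\BS f_k(x,m)\,\ud m=1$ a.e.\ gives $\int_\O\psi(x)\big(\int_\BS f(x,m)\,\ud m\big)\,\ud x=\int_\O\psi(x)\,\ud x$ for every such $\psi$, whence $\int_\BS f\,\ud m=1$ a.e.; this already proves the first assertion of the lemma.

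To establish \eqref{convexlimit} I would introduce $J(g):=\int_{\O\times\BS}g\log g\,\ud x\,\ud m$, with the convention $0\log0:=0$. Since $t\mapsto t\log t$ is convex, $J$ is convex on $L^1(\O\times\BS)$, and it is strongly lower semicontinuous: given $g_j\to g$ in $L^1$, choose a subsequence $g_{j_\ell}$ with $\lim_\ell J(g_{j_\ell})=\liminf_j J(g_j)$ and, after a further refinement, $g_{j_\ell}\to g$ a.e.; then Fatou's lemma applied to the nonnegative integrands $g_{j_\ell}\log g_{j_\ell}+e^{-1}$ gives $J(g)\le\liminf_j J(g_j)$. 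A convex functional on a Banach space that is strongly lower semicontinuous has convex, strongly closed --- hence, by Mazur's lemma, weakly closed --- sublevel sets, so it is weakly sequentially lower semicontinuous; applying this to $g_k=f_k\rightharpoonup f$ gives \eqref{convexlimit}.

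The only genuinely delicate point is this last passage from strong to weak lower semicontinuity. One cannot simply invoke the subgradient inequality $f_k\log f_k\ge f\log f+(1+\log f)(f_k-f)$ and pass to the limit, because $1+\log f$ need not belong to $L^\infty(\O\times\BS)$ and weak $L^1$ convergence does not allow pairing against it; it is precisely convexity --- through Mazur's lemma, or equivalently through the dual representation $t\log t=\sup_{s\in\RR}\big(st-e^{s-1}\big)$ tested against finitely-valued $s$ together with monotone convergence --- that closes this gap.
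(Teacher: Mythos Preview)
Your proof is correct and follows the standard route that the paper also adopts: the paper simply cites Golse--Saint-Raymond \cite{MR2197021} for both the weak-$L^1$ compactness (via de la Vall\'ee--Poussin/Dunford--Pettis) and the nonnegativity of the limit, then verifies $\int_\BS f\,\ud m=1$ exactly as you do, while leaving the lower semicontinuity \eqref{convexlimit} implicit in that citation. Your argument is thus a self-contained expansion of what the paper outsources to a reference.
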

\begin{proof}
  The weakly $L^1$-compactness of $\{f_k\}_{k\geq 1}$ and the almost everywhere inequality $f\geq 0$ follow from  \cite[page 47 and page 53]{MR2197021}. In order to show that $f\in\mathscr{H}(\O)$, we choose any test   function $\varphi=\varphi(x)$ and the weak convergence of $f_k$ leads to
\begin{equation*}
\int_\O \varphi(x)\ud x= \int_{\BS\times\O} f_k(x,m)\varphi(x)\ud x\ud m \to \int_{\BS\times\O}f(x,m)\varphi(x)\ud x\ud m
\end{equation*}
Since $\varphi(x)$ is arbitrary, we have that $$\int_\BS f(x,m)\ud m=1~a.e.~ x\in\O.$$
\end{proof}
\begin{proof}[Proof of Lemma \ref{axisymmetric}]
We first note that the space $\mathscr{H}$ in the statement of  Lemma \ref{axisymmetric} is non-empty since it includes the isotropic distribution $h=\frac 1{4\pi}$.
  Choosing any minimizing sequence $f_k\in\mathscr{H}$ such that $$\lim_{k\to\infty}A [f_k]= \inf_{f\in\mathscr{H}}  A[f] .$$
 Then it follows from   \eqref{eq:1.02} that
  $$ \int_\BS f_k\log f_k \ud m \leq C$$
  where $C$ is independent of $k$. Note that, we can consider $f_k$ to be elements in $\mathscr{H}(\O)$ that remain constant with respect to $x\in\O$. So we can apply Lemma \ref{convexlem} and this leads to
  \begin{equation*}
    f_k\rightharpoonup f~\text{ weakly  in}~L^1( \BS)
  \end{equation*}
 where  $f\in \mathscr{H}$.  This proves the existence of global minimizer $f$. Now we investigate its precise form.
  For any $\tilde{f}\in \mathscr{H}$ and $t\in [0,1]$, since $(1-t)f+t\tilde{f}\in \mathscr{H}$, the series $A[(1-t)f+t\tilde{f}]$ is well defined. Moreover, since $f$ is a global minimizer, we have
\begin{equation*}
 0\leq \lim_{  t\to 0^+}\frac {A[f+t(\tilde{f}-f)]-A[f]}  t  = \int_\BS (\log f+\CU[f] )(\tilde{f}-f) \ud m.
\end{equation*}
Since $\tilde{f}$ is arbitrary in $\mathscr{H}$, there exists some $\lambda\in\RR$ such that
\begin{equation*}
  \log f+\CU[f]= \lambda,\quad ~\text{a.e. on} ~ \BS.
\end{equation*}
One can easily determine $\lambda$ and deduce
\begin{equation*}
  f=\frac {e^{-\CU[f](m) }}{\int_\BS e^{-\CU[f](m) }\ud m}.
\end{equation*}
This implies that $f$ is also a critical point of \eqref{Intro-energy} and this together with  the discussion before leads to the desired result.
\end{proof}

In the sequel, we will work with $\alpha$ and $\eta$ as in Lemma \ref{axisymmetric}:
\begin{equation*}
  \alpha>7.5,\quad \eta=\eta_1(\alpha)\neq 0.
\end{equation*}
The relationship between the   uniaxial distribution \eqref{uniaxial} and its $Q$-tensor is nicely summarized by the  following  formulas:
\begin{Lemma}\label{lemma1}
For any uniaxial distribution \eqref{uniaxial} with $\nu\in \BS$, we have
  \begin{equation}\label{uniaxialtensor}
  Q[h_\nu]=s_2\(\nu\otimes \nu-\tfrac 13\mathbb{I}_3\)~\text{where}~s_2\neq 0.
\end{equation}
 For isotropic distribution $h=\frac 1{4\pi}$, it holds $$Q[h]=0.$$
\end{Lemma}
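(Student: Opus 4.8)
The plan is to handle the two cases separately, and the isotropic one first. For $h\equiv\tfrac1{4\pi}$ we simply compute $Q[h]=\tfrac1{4\pi}\int_{\BS}m\otimes m\,\ud m-\tfrac13\mathbb{I}_3$; since $\int_{\BS}m\otimes m\,\ud m$ is invariant under conjugation by every $R\in SO(3)$ (change variables $m\mapsto Rm$), it must be a multiple of $\mathbb{I}_3$, and taking traces with $|m|^2=1$ forces that multiple to be $\tfrac{4\pi}{3}$, so $Q[h]=0$.

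For the uniaxial distribution I would first pin down the algebraic form of $Q[h_\nu]$ by a Schur‑type symmetry argument. If $R\in SO(3)$ satisfies $R\nu=\nu$, then $(Rm\cdot\nu)^2=(m\cdot R^T\nu)^2=(m\cdot\nu)^2$, hence $h_\nu\circ R=h_\nu$; substituting $m\mapsto Rm$ in \eqref{eq:1.05} gives $R^T Q[h_\nu]R=Q[h_\nu]$ for every rotation $R$ about the axis $\nu$. An elementary computation then shows that a symmetric matrix commuting with the whole rotation group about $\nu$ preserves the splitting $\RR^3=\RR\nu\oplus\nu^\perp$ and is scalar on each summand, i.e. $Q[h_\nu]=a\,\nu\otimes\nu+b\,(\mathbb{I}_3-\nu\otimes\nu)$. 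Since $\tr Q[h_\nu]=\int_{\BS}(|m|^2-1)h_\nu\,\ud m=0$ we get $a+2b=0$, whence $Q[h_\nu]=\tfrac{3a}{2}\big(\nu\otimes\nu-\tfrac13\mathbb{I}_3\big)$, which is \eqref{uniaxialtensor} with $s_2:=\tfrac{3a}{2}$.

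The remaining point — and the only genuinely non-routine one — is that $s_2\neq0$. Contracting \eqref{uniaxialtensor} twice with $\nu$ gives $\tfrac23 s_2=\nu^T Q[h_\nu]\nu=\int_{\BS}\big((m\cdot\nu)^2-\tfrac13\big)h_\nu(m)\,\ud m$, and writing $z=m\cdot\nu$ (using $\int_{\BS}\phi(m\cdot\nu)\,\ud m=2\pi\int_{-1}^1\phi(z)\,\ud z$ and the evenness of the integrand) yields
\begin{equation*}
\tfrac23 s_2=\frac{\int_0^1(z^2-\tfrac13)e^{\eta z^2}\,\ud z}{\int_0^1 e^{\eta z^2}\,\ud z}=\mathbb{E}_{\mu_\eta}[z^2]-\tfrac13,\qquad \ud\mu_\eta:=\frac{e^{\eta z^2}\,\ud z}{\int_0^1 e^{\eta z^2}\,\ud z}.
\end{equation*}
For $\eta=0$ the right‑hand side vanishes, but $\eta\mapsto\mathbb{E}_{\mu_\eta}[z^2]$ is strictly increasing since $\tfrac{\ud}{\ud\eta}\mathbb{E}_{\mu_\eta}[z^2]=\mathbb{E}_{\mu_\eta}[z^4]-\big(\mathbb{E}_{\mu_\eta}[z^2]\big)^2=\mathrm{Var}_{\mu_\eta}(z^2)>0$ ($z^2$ being non-constant on $[0,1]$). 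Hence $\mathbb{E}_{\mu_\eta}[z^2]\neq\tfrac13$ whenever $\eta\neq0$, and since by the standing assumption $\eta=\eta_1(\alpha)\neq0$, we conclude $s_2\neq0$. All steps except this last variance estimate are pure symmetry bookkeeping; the estimate itself is what makes the strict inequality $s_2\neq0$ — and thus the nematic (rather than isotropic) character of $h_\nu$ — precise.
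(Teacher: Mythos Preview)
Your proof is correct and self-contained, but it differs from the paper's in two respects. First, for the algebraic form $Q[h_\nu]=s_2(\nu\otimes\nu-\tfrac13\mathbb{I}_3)$ the paper simply cites \cite[Lemma~6.6]{MR3366748}, whereas you supply the underlying Schur-type symmetry argument (invariance under rotations fixing $\nu$ plus tracelessness); this is essentially what that cited lemma proves, so you are filling in what the paper outsources. Second, and more substantively, for $s_2\neq0$ the paper uses a different mechanism: an integration-by-parts identity
\[
\int_0^1 z(1-z^2)\,\ud(e^{\eta z^2})+\int_0^1 e^{\eta z^2}\,\ud\big(z(1-z^2)\big)=0
\]
combined with the defining relation \eqref{eq:1.10} between $\alpha$ and $\eta$ to obtain the \emph{exact} formula $s_2=\eta/\alpha$, from which $s_2\neq0$ is immediate since $\eta\neq0$. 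Your variance argument ($\eta\mapsto\mathbb{E}_{\mu_\eta}[z^2]$ is strictly increasing, equals $\tfrac13$ only at $\eta=0$) yields the same qualitative conclusion without invoking \eqref{eq:1.10}, so it is more robust to changes in the potential; the paper's route, on the other hand, gives quantitative information (the precise value of $s_2$) that could be useful elsewhere.
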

\begin{proof}
From \cite[Lemma 6.6]{MR3366748}, we have that
\begin{align*}
Q[h_\nu]=\int_{\BS}(m\otimes m-\tfrac13\mathbb{I}_3)h_\nu(m)\ud m=s_2(\nu\otimes\nu-\tfrac13\mathbb{I}_3).
\end{align*}
In the above formula, the parameter  $s_2$ is called degree of orientation and  is defined via
\begin{equation}\label{s2def}
  s_2=\int_\BS P_2(m\cdot \nu)\frac{e^{\eta(m\cdot \nu)^2}}{\int_\BS e^{\eta(m\cdot \nu)^2}\ud m}\ud m
  =\frac{\int_{-1}^1 P_2(z)e^{\eta z^2}\ud z}{\int_{-1}^1 e^{\eta z^2}\ud z},
\end{equation}
with $P_2(x)=\frac 12(3x^2-1)$ being the $2$-th Legendre polynomial.

 Concerning the sign of $s_2$, note that
\begin{align*}
\int_0^1z(1-z^2)\ud( e^{\eta z^2})+\int_0^1 e^{\eta z^2} \ud ( z(1-z^2))=e^{\eta z^2} z(1-z^2)|_0^1=0,
\end{align*}
we deduce that
\begin{equation*}
  s_2=\frac{\eta}{\alpha}\neq 0
\end{equation*}
by recalling \eqref{eq:1.10}, \eqref{s2def} and the choice of $\eta$. The case for $h=\frac 1{4\pi}$ is evident.
\end{proof}

\section{$\ve$-Independent A Priori Estimate and Compactness}\label{sec5}

\begin{Lemma}\label{yuning:eulerlag}
  For each $\ve>0$, let $f\in \mathscr{A}$ be a critical point  to \eqref{eq:free energy-inhom-intro}-\eqref{eq:On-interaction}, then $Q[f]$ satisfies the following
  \begin{equation}\label{eulerlag}
   \sum_{i=1}^3 Q^i[f ](x)\wedge (Q^i[f ]*_\O g_\ve)(x)=0,~\forall x\in\O_\delta
  \end{equation}
  where $Q^i[f]$ denotes the $i$-th row vector of    $Q[f]$.
\end{Lemma}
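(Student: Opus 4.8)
The plan is to first derive the Euler--Lagrange equation for $A_\ve$ in its fiberwise (pointwise in $x$) form on the bulk region $\O_\delta$, and then apply the rotational gradient $\CR$ in the variable $m$. I would begin by rewriting the interaction energy through the macroscopic field $Q[f]$: for any $f\in\mathscr H(\O)$, using $|m\wedge m'|^2=1-(m\cdot m')^2$, the identity $(m\cdot m')^2=(m\otimes m):(m'\otimes m')$, the moment relation $\int_\BS(m'\otimes m')f(x',m')\,\ud m'=Q[f](x')+\tfrac13\mathbb{I}_3$, the normalization $\|f(\cdot,x)\|_{L^1(\BS)}=1$ and $\tr Q[f]=0$, one finds
\[
\int_\O\int_\BS f\,\CU_\ve[f]\,\ud m\,\ud x
=\alpha\int_\O\int_\O\Big(\tfrac23-Q[f](x):Q[f](x')\Big)g_\ve(x-x')\,\ud x'\,\ud x ,
\]
and in particular $\CU_\ve[f](x,m)=-\alpha\,(m\otimes m):(Q[f]*_\O g_\ve)(x)$ up to a quantity depending on $x$ alone; recall also that $g_\ve$ is even.

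Next I would compute the first variation of $A_\ve$ at the critical point $f$. Since the anchoring constraint in $\mathscr A$ only prescribes $Q[f]$ on $\O^\delta$, on $\O_\delta$ the sole remaining constraint on an admissible perturbation $\phi=\phi(x,m)$ (taken compactly supported in the $x$-variable inside $\O_\delta$) is the fiberwise normalization $\int_\BS\phi(x,\cdot)\,\ud m=0$. Introducing a Lagrange multiplier $\lambda(x)$ for it, the variation of the entropy produces $\log f$, the variation of the interaction produces $2\,\CU_\ve[f]$ (it being a quadratic functional with symmetric kernel, as $g_\ve$ is even), and absorbing the quantities depending only on $x$ into $\lambda$ one obtains
\[
\log f(x,m)=2\alpha\,(m\otimes m):\big(Q[f]*_\O g_\ve\big)(x)+\lambda(x),\qquad x\in\O_\delta ,
\]
equivalently $f(x,m)=e^{2\alpha(m\otimes m):P(x)}\big(\int_\BS e^{2\alpha(m\otimes m):P(x)}\,\ud m\big)^{-1}$ with $P:=Q[f]*_\O g_\ve$. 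As $g_\ve$ is smooth, $P$ is smooth and $f$ is continuous on $\O_\delta$, so this equation, and the identities derived from it, hold for every $x\in\O_\delta$, not merely almost everywhere.

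Then I would apply $\CR=m\wedge\nabla_m$ in $m$ to this Euler--Lagrange equation. By the chain rule $\CR f=f\,\CR\log f$; and since $\lambda(x)$ is independent of $m$ while a short computation using \eqref{rotational} and \eqref{eq:1.12} gives $\CR\big((m\otimes m):P\big)=\CR(m^{\mathrm T}Pm)=2\,m\wedge(Pm)$ for symmetric $P$, we obtain $\CR f=4\alpha\,f\,(m\wedge Pm)$ on $\O_\delta$. Integrating over $\BS$ and using $\int_\BS\CR f\,\ud m=0$ (the integration-by-parts formula in \eqref{rotational} against a constant) gives $\int_\BS f(x,m)\,(m\wedge P(x)m)\,\ud m=0$. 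It remains to identify this integral: expanding $m\wedge(Pm)$ with \eqref{eq:1.12}, using $\int_\BS f\,m_km_n\,\ud m=Q[f]_{kn}+\tfrac13\delta_{kn}$ and the symmetry of $P$, one checks the algebraic identity $\int_\BS f(x,m)\,(m\wedge P(x)m)\,\ud m=\sum_{i=1}^3 Q^i[f](x)\wedge P^i(x)$, where $P^i$ is the $i$-th row of $P$. Since $P=Q[f]*_\O g_\ve$, this is exactly \eqref{eulerlag}.

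Finally I would record a variant that bypasses the a.e.\ positivity of $f$ used above: test instead against rotational perturbations $f_t(x,m):=f(x,R_t(x)m)$, where $R_t(x)\in\mathrm{SO}(3)$, $R_0(x)=\Id$, $\partial_tR_t(x)|_{t=0}=\widehat{\omega(x)}$ (the antisymmetric matrix with $\widehat v\,w=v\wedge w$) and $\omega\in C_0^\infty(\O_\delta;\RR^3)$. Such $f_t$ lie in $\mathscr A$ for free (nonnegativity and fiberwise normalization survive the rotation, and $f_t\equiv f$ on $\O^\delta$), the entropy is exactly invariant along $t\mapsto f_t$, and differentiating the $Q$-tensor form of the interaction energy (again using the evenness of $g_\ve$ to symmetrize, and $Q[f_t](x)=R_t(x)^{\mathrm T}Q[f](x)R_t(x)$) gives $\frac{d}{dt}\big|_{t=0}A_\ve[f_t]=4\alpha\int_\O\omega(x)\cdot\sum_i Q^i[f](x)\wedge(Q^i[f]*_\O g_\ve)(x)\,\ud x$, which must vanish for all such $\omega$, again yielding \eqref{eulerlag}. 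The difficulties are analytic bookkeeping rather than anything conceptual: one must justify the first variation (admissibility of the perturbed families, finiteness of the entropy at a critical point, and, for the Lagrange-multiplier route, the a.e.\ positivity of $f$ that legitimizes $\log f$ and the differentiation of the entropy), and one must keep careful track of the non-commutative convolution $*_\O$ together with the evenness of $g_\ve$ so that the interaction term symmetrizes. The one genuinely computational point is the Levi--Civita identity $\int_\BS f\,(m\wedge Pm)\,\ud m=\sum_i Q^i[f]\wedge P^i$, which is where the row-vector wedge structure of \eqref{eulerlag} comes from.
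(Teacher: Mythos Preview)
Your main argument is essentially the paper's: derive the pointwise Euler--Lagrange equation on $\O_\delta$ via a Lagrange multiplier, apply $\CR$ to it, and integrate over $\BS$ to extract the row-wedge identity for $Q[f]$. You are in fact more careful than the paper about the factor of $2$ coming from the variation of the quadratic interaction term (the paper writes $\log f+\CU_\ve[f]=\lambda$, silently dropping this factor, though it is immaterial for \eqref{eulerlag}), and your rotational-perturbation variant is a clean alternative not present in the paper.
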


\begin{proof}
The proof consists of three parts:

\noindent $\bullet$ \textit{Deriving the Euler-Lagrange equation}:
First note that $\mathscr{A}$ is a convex set: if $f, g\in\mathscr{A}$, so does   $(1-t)f +tg\in\mathscr{A}$. Since $f $ is a critical point to \eqref{globalmini2}, it holds that
  \begin{equation*}
  0\leq \lim_{t\to 0^+}\tfrac 1t\(A_\epsilon[f+t(g-f)]-A_\epsilon[f]\)=\int_{\O\times\BS} (\log f+\CU_\epsilon[f])(g-f)\ud m\ud x.
\end{equation*}
Following the same argument as in the proof of Lemma \ref{axisymmetric} we can get
\begin{equation}\label{eq:1.21}
  \log f (x,m)+\CU_\epsilon[f ](x,m)=\lambda(x),~\forall x\in \O_\delta,
\end{equation}
where $\lambda(x)$ is the Lagrange multiplier corresponding to the constrain.
Thus we obtain the following formula
\begin{equation}\label{expl1}
  f =\frac {e^{-\CU_\epsilon[f ]}}{\int_\BS e^{-\CU_\epsilon[f ]}\ud m}\quad a.e. ~x\in \O_\delta.
\end{equation}
This implies that, the global minimizer of \eqref{globalmini2} is actually smooth function since $\CU_\ve$ is a regular convolution operator.

\noindent$\bullet$ \textit{Macroscopic equation:}
It follows from \eqref{rotational} that
\begin{equation*}
\CR\CU_\ve[f]= -2\alpha \int_\Omega\int_\BS m\cdot m'(m\wedge m') g_\ve(x-x')f(x',m')\ud x'\ud m'.
\end{equation*}
Then we have by \eqref{eq:1.21}  that
\begin{equation*}
  \begin{split}
    \CR f&=-f\CR\CU_\ve [f]\\
    &=2\alpha f \int_{\BS\times\O}m\wedge  m'( m\cdot m')g_\ve(x-x')f(m',x')\ud m'\ud x'\\
    &=2\alpha f \int_{\BS\times\O}m_i  m'_j \varepsilon^{ij\ell} m_k  m'_kg_\ve(x-x')f(m',x')\ud m'\ud x'\\
    &=2\alpha f \int_{\O}m_i m_k  \varepsilon^{ij\ell}   g_\ve(x-x')\(\tfrac 13\delta_{jk}+\int_\BS \(m'_j m'_k-\tfrac 13\delta_{jk}\)f(m',x')\ud m'\)\ud x'\\
    &=2\alpha f(x,m)  m_i   m_k\varepsilon^{ij\ell}    Q_{jk}[f]*_\O g_\ve.
  \end{split}
\end{equation*}
Integrate this identity over $\BS$ and use \eqref{rotational} lead to
\begin{equation*}
 \varepsilon^{ij\ell} Q_{ik}[f](x)( Q_{jk}[f]*_\O g_\ve)(x) =0,
\end{equation*}
which is equivalent to \eqref{eulerlag}, according to \eqref{eq:1.12}.
\end{proof}

\begin{Proposition}\label{compactness1}
Let  $f^\ve\in \mathscr{A}$ be    extended to $\bar{f}^\ve$ via \eqref{extension1} and satisfies
\begin{equation}\label{push1}
    \frac 1{\ve}\int_\Omega  \(A[f^\ve] -A[h_{n_b}]\) \ud x+\frac \alpha{2\ve }\int_{\RR^d\times\RR^d}\left|Q[\bar{f^\ve}](x)-Q[\bar{f^\ve}](y)\right|^2
g_\ve\(x-y\)\ud x\ud y\leq C,
  \end{equation}
  for some $\ve$ independent constant $C$. Then modulo the extraction of a subsequence,
\begin{equation}\label{eq:1.06}
  u_\ve(x):=Q[\bar{f^\ve}](x)
\end{equation}
has the following properties
 \begin{equation}\label{20150626claim1}
  \left\{
  \begin{array}{rl}
    u_\ve\to &\Psi~\text{strongly in }~L^2(\RR^d),\\
     \nabla (u_\ve*g_\ve)\rightharpoonup &\nabla \Psi~\text{weakly in }L_{loc}^2(\RR^d)
  \end{array}
  \right.
\end{equation}
where $\Psi$ satisfies
\begin{equation*}
  \Psi\in H^1(\RR^d)~\text{with compact support}.
\end{equation*}
Moreover, up to the extraction of a subsequence,
  \begin{equation}\label{weakcon3}
    \bar{f^\ve}\rightharpoonup \bar{f}~\text{ weakly  in}~L_{loc}^1(\RR^d\times\BS),
  \end{equation}
  where  $\bar{f}(x,m)$ is given by
  \begin{equation}\label{extension5}
    \bar{f}(x,m)=\left\{
    \begin{array}{rl}
      \frac {e^{\eta(m\cdot n(x))^2}}{\int_\BS e^{\eta(m\cdot n(x))^2}dm}&~\text{for}~ x\in \O,\\
      h_{n_b}(x,m)&~\text{for}~ x\in \RR^d\backslash\O
    \end{array}
    \right.
  \end{equation}
  for some $n(x)\in H^1(\O;\BS)$
   and
   \begin{equation}\label{eq:1.11}
     \Psi(x)=Q[\bar{f}](x)~a.e.~x\in \RR^d.
   \end{equation}
\end{Proposition}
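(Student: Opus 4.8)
The plan is to extract compactness for the macroscopic variable $u_\ve=Q[\bar f^\ve]$ from the two terms in the a priori bound \eqref{push1}, and only afterwards recover the limiting density $\bar f$. First I would record that the second term in \eqref{push1} is, up to the normalization $\tfrac\alpha2$, exactly a Gagliardo–type nonlocal seminorm of $u_\ve$ against the mollifier $g_\ve$; passing to Fourier variables it reads $\int_{\RR^d}\bigl(1-\hat g(\sqe\xi)\bigr)|\hat u_\ve(\xi)|^2\,\ud\xi$. The standing assumption \eqref{assump1}, namely $c_0|\xi|^2\hat g^2(\xi)\le 1-\hat g(\xi)$, then gives $c_0\ve\int_{\RR^d}|\xi|^2\hat g^2(\sqe\xi)|\hat u_\ve(\xi)|^2\,\ud\xi\le C\ve$, i.e. $\|\nabla(u_\ve*g_\ve)\|_{L^2(\RR^d)}^2=\|2\pi\xi\,\hat g(\sqe\xi)\hat u_\ve(\xi)\|_{L^2}^2\le C/c_0$, a bound uniform in $\ve$. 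Simultaneously, since $|Q[\bar f^\ve]|$ is bounded pointwise (the second moment of a probability density on $\BS$) and $\bar f^\ve$ equals the fixed $h_{n_b}$ outside $\O\subset B_{R/4}(0)$, the $u_\ve$ are uniformly bounded in $L^\infty$ with uniformly compact support, hence bounded in $L^2(\RR^d)$. Combining, $u_\ve*g_\ve$ is bounded in $H^1(\RR^d)$, and $\|u_\ve-u_\ve*g_\ve\|_{L^2}^2=\int|1-\hat g(\sqe\xi)|^2|\hat u_\ve|^2\le\int(1-\hat g(\sqe\xi))|\hat u_\ve|^2\le C\ve\to0$, so $u_\ve$ and $u_\ve*g_\ve$ have the same $L^2$-limit. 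By Rellich–Kondrachov on a fixed ball containing all supports, a subsequence of $u_\ve*g_\ve$ converges strongly in $L^2_{loc}$ and weakly in $H^1$ to some $\Psi\in H^1(\RR^d)$; by the compact-support uniformity $\Psi$ has compact support and the convergence of $u_\ve$ is strong in $L^2(\RR^d)$, giving \eqref{20150626claim1}.

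Next I would use the first term of \eqref{push1}. Because $\int_\BS f^\ve\log f^\ve$ is controlled (the entropy part of $A[f^\ve]$, the potential part being bounded by $\tfrac23\alpha$), Lemma \ref{convexlem} applies on any fixed ball: $\bar f^\ve\rightharpoonup\bar f$ weakly in $L^1_{loc}(\RR^d\times\BS)$ with $\bar f\in\mathscr H$ fiberwise, which is \eqref{weakcon3}. Weak $L^1$ convergence in $m$ is enough to pass to the limit in the linear moment map, so $\Psi(x)=Q[\bar f](x)$ a.e., which is \eqref{eq:1.11}; and outside $\O$ the density is frozen at $h_{n_b}$, so $\bar f=h_{n_b}$ there, matching the second line of \eqref{extension5}.

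The substantive step is to show that on $\O$ the limit $\bar f$ is the uniaxial distribution $h_{n(x)}$ for some $n\in H^1(\O;\BS)$, i.e. the first line of \eqref{extension5}. For this I would exploit the strong $L^2$ convergence of $u_\ve=Q[\bar f^\ve]$ together with a pointwise relaxation argument: fix $x$; the first term of \eqref{push1} forces $A[f^\ve(x,\cdot)]\to A[h_{n_b}]$ in an averaged sense, and since $A[h_{n_b}]$ is (by Lemma \ref{axisymmetric} and the choice $\eta=\eta_1(\alpha)$) the \emph{global minimum} of the Maier–Saupe energy over $\mathscr H$, a subsequence of $f^\ve(x,\cdot)$ is a minimizing sequence of $A$ for a.e.\ $x$; by the rigidity in Lemma \ref{axisymmetric} its weak limit must be of the form $h_{\nu(x)}$ with $\nu(x)\in\BS$. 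Then $\Psi(x)=Q[h_{\nu(x)}]=s_2(\nu(x)\otimes\nu(x)-\tfrac13\II)$ by Lemma \ref{lemma1}, with $s_2\neq0$; this algebraic relation lets one recover $n(x)$ from $\Psi(x)$ up to sign, namely $n(x)\otimes n(x)=\tfrac1{s_2}\Psi(x)+\tfrac13\II$, so $n$ is measurable, $|n|=1$, and $n\otimes n\in H^1(\O)$. The sign ambiguity is resolved locally since $\O$ is simply connected and $\Psi$ (hence $n\otimes n$) is $H^1$: one lifts $n\otimes n$ to a unit vector field $n\in H^1$, using that away from the zero set of $\Psi$ (which is empty here, $|\Psi|\equiv s_2$ on $\O$) the orientation bundle is trivial. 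Finally $\nabla n$ is controlled by $\nabla\Psi\in L^2$ via $|\nabla n|\lesssim|\nabla(n\otimes n)|=\tfrac1{|s_2|}|\nabla\Psi|$, giving $n\in H^1(\O;\BS)$.

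I expect the main obstacle to be precisely this last step: making rigorous the passage ``averaged convergence of $A[f^\ve]$ to the global minimum'' $\Rightarrow$ ``pointwise-a.e.\ the limit is uniaxial'', which requires a Fatou/Fubini argument combined with the strict characterization of minimizers in Lemma \ref{axisymmetric}, and then upgrading the resulting measurable $\nu(x)$ to an $H^1$ unit-vector lifting $n(x)$ — the regularity transfer from $\Psi$ to $n$ and the global sign choice on the simply connected domain $\O$. The Fourier-analytic compactness for $u_\ve$, by contrast, is routine once \eqref{assump1} is invoked.
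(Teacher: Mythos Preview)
Your Fourier-side treatment of the compactness of $u_\ve=Q[\bar{f}^\ve]$ (the bounds $\|u_\ve-u_\ve*g_\ve\|_{L^2}^2\le C\ve$ and $\|\nabla(u_\ve*g_\ve)\|_{L^2}^2\le C$ from \eqref{assump1}, followed by Rellich) is exactly the paper's argument, as is the entropy bound yielding weak $L^1$ compactness of $\bar{f}^\ve$ and the identification $\Psi=Q[\bar f]$. The one genuine difference is the uniaxiality step. You propose a pointwise route: from $\int_\O(A[f^\ve]-A[h_{n_b}])\le C\ve\to 0$ and $A[f^\ve]\ge A[h_{n_b}]$, deduce that $f^\ve(x,\cdot)$ is a minimizing sequence of $A$ for a.e.\ $x$, then invoke rigidity of minimizers. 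This can be made to work, but---as you correctly flag---one must still identify the fiberwise weak limit with $\bar f(x,\cdot)$, which is not automatic from weak $L^1(\O\times\BS)$ convergence and requires an extra uniqueness-of-cluster-point argument via the strong convergence of $Q[\bar{f}^\ve](x)$. The paper sidesteps this entirely by applying lower semicontinuity to the \emph{integrated} functional: Lemma~\ref{convexlem} (weak lower semicontinuity of entropy) together with the already-established strong $L^2$ convergence of $Q[\bar{f}^\ve]$ give directly $\int_\O A[\bar f]\le\liminf_\ve\int_\O A[f^\ve]\le\int_\O A[h_{n_b}]$; since $A[\bar f(x,\cdot)]\ge A[h_{n_b}]$ pointwise, equality holds a.e., so $\bar f(x,\cdot)$ is itself a minimizer---hence uniaxial---for a.e.\ $x$. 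This is shorter and dissolves the obstacle you anticipated. For the final lifting from $Q[\bar f]\in H^1(\O)$ to $n\in H^1(\O;\BS)$, your hands-on description is precisely the content of the Ball--Zarnescu orientability theorem \cite{BallZarnescu2011}, which the paper invokes by name.
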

\begin{proof}

The proof will be separated into several parts.

\noindent$\bullet$ \textit{Proof of  \eqref{20150626claim1}:}
 The assertion follows if we can prove the following estimate:
  \begin{equation}\label{20150626bound1}
\frac 1\ve\int_{\RR^d} |u_\ve*g_\ve-u_\ve|^2 \ud x+\int_{\RR^d} |\nabla (u_\ve*g_\ve)|^2 \ud x\leq  C.
\end{equation}
Actually, it follows from   \eqref{20150626bound1} and compact imbedding theorem of Sobolev space that, up to the extraction of a subsequence,
$\{u_\sigma*g_\sigma\}_{\sigma>0}$ is a Cauchy sequence in $L^2_{loc}(\RR^d)$ and this together with the   following inequality implies the strong convergence of $u_\ve$ in $L^2_{loc}(\O)$:
\begin{equation*}
  |u_\ve-u_\sigma|\leq |u_\ve-g_\ve*u_\ve|+|u_\sigma-u_\sigma*g_\sigma|+|u_\sigma *g_\sigma-u_\ve*g_\ve|.
\end{equation*}
Moreover, it follows from  \eqref{eq:boundary}, \eqref{eq:1.01} and \eqref{bound4} that,  $\bar{f}^\ve(x,m)\equiv\frac 1{4\pi}$ for $x\in\RR^d\backslash B_R(0)$. This together with Lemma \ref{lemma1} implies that
\begin{equation*}
  u_\ve(x):=Q[\bar{f^\ve}](x)\equiv 0,\quad \forall x\in\RR^d\backslash B_R(0).
\end{equation*}
So $u_\ve \to\Psi$ strongly in $L^2(\RR^d)$  where $\Psi\in L^2(\RR^d)$ with compact support.

The second part of \eqref{20150626claim1} follows from the weak compactness of $L^2_{loc}(\RR^d)$ and \eqref{20150626bound1}: on one hand, we have $\nabla (u_\ve* g_\ve)\rightharpoonup \Phi=\{\Phi_k\}_{1\leq k\leq d}$ weakly in $L^2_{loc}(\RR^d)$. Then for any test function $\varphi(x)\in C_0^\infty(\O)$,
\begin{equation*}
  -\int_{\RR^d}\p_k (u_\ve* g_\ve)\varphi=\int_{\RR^d} (u_\ve* g_\ve)\p_k\varphi=\int_{\RR^d} u_\ve \cdot(g_\ve*\p_k\varphi).
\end{equation*}
Taking $\ve\to 0$ leads to
\begin{equation*}
  -\int_{\RR^d}\varphi \Phi_k =\int_{\RR^d} \Psi\p_k\varphi~\text{which implies}~\nabla \Psi=\Phi\in L^2(\RR^d).
\end{equation*}

Now we are in position to prove \eqref{20150626bound1}. The reader can consult \cite{AB} for an approach without using Fourier transform. First, we have from Plancherel theorem that
\begin{equation}\label{bound1}
\begin{split}
\int_{\RR^d} |u_\ve*g_\ve-u_\ve|^2 \ud x
=&\|(1-\hat{g}_\ve)\hat{u}_\ve\|_{L^2}^2
\le 2\|\sqrt{1-\hat{g}_\ve}\hat{u}_\ve\|_{L^2}^2\\
=&\int_{\RR^d\times\RR^d} g_\ve(x-y)|u_\ve(x)-u_\ve(y)|^2\ud x\ud y.
\end{split}
\end{equation}
 On the other hand, it follows from \eqref{assump1} that we have
\begin{equation}\label{bound2}
  \begin{split}
  &\int_{\RR^d} |\nabla (g_\ve*u_\ve)|^2 \ud x
  =4\pi^2\|\xi\hat{g}_\ve\hat{u}_\ve\|_{L^2}^2\le C \ve^{-1}\|\sqrt{(1-\hat{g}_\ve)}\hat{u}_\ve\|_{L^2}^2\\
    &=\tfrac C\ve   \int_{\RR^d\times\RR^d}  |u_\ve(x)-u_\ve(y)|^2g_\ve(x-y) \ud x \ud y.
  \end{split}
\end{equation}
Then we can combine \eqref{bound1} with \eqref{bound2} together with \eqref{push1} to get \eqref{20150626bound1}.

\noindent$\bullet$ \textit{Proof of  \eqref{weakcon3}:}
It suffices to show
  \begin{equation}\label{extension6}
    \bar{f^\ve}\rightharpoonup \bar{f}~\text{ weakly  in}~L^1(\O\times\BS)
  \end{equation}
  for some  number density function $\bar{f}(x,m)$ which is uniaxial on $\O\times\BS$ and is extended to be  $h_{n_b}$ in $\O^c\times\BS$. Actually, it follows  from   \eqref{extension1}, \eqref{extension5}, and \eqref{extension6}   that, for any test function $\varphi(x,m)$,
  \begin{equation*}
    \begin{split}
      &\int_{\RR^d\times\BS}\bar{f^\ve}(x,m)\varphi(x,m)\ud x\ud m\\
      =&\int_{\O\times\BS}\bar{f^\ve}(x,m)\varphi(x,m)\ud x\ud m+\int_{ \O^c \times\BS}h_{n_b}(x,m)\varphi(x,m)\ud x\ud m\\
      \rightarrow &\int_{\O\times\BS}\bar{f}(x,m)\varphi(x,m)\ud x\ud m+\int_{ \O^c \times\BS}h_{n_b}(x,m)\varphi(x,m)\ud x\ud m\\
      =&\int_{\RR^d\times\BS}\bar{f}(x,m)\varphi(x,m)\ud x\ud m.
    \end{split}
  \end{equation*}

  To prove \eqref{extension6},
  we  first deduce from  \eqref{eq:1.02} and \eqref{push1} that
\begin{equation*}
  \int_{\O\times\BS}\bar{f^\ve}\ln \bar{f^\ve}\ud x\ud m+\frac23\alpha|\O|-\alpha\int_\O|Q[f^\ve](x)|^2\ud x=\int_\O A[f^\ve](x)\ud x\leq C
\end{equation*}
Owning to the fact that
\begin{equation*}
  |Q[f^\ve](x)|:=\left|\int_{\BS}\(m\otimes m-\tfrac 13\mathbb{I}_3\)f(x,m)\ud m\right|\leq 1,\quad \forall x\in \O,
\end{equation*}
 we obtain the entropy estimate
\begin{equation*}
  \int_{\O\times\BS}\bar{f^\ve}\ln \bar{f^\ve}\ud x\ud m\leq \tilde{C}.
\end{equation*}
This together with   Lemma \ref{convexlem} leads to \eqref{extension6} and thus
\eqref{weakcon3} is proved. It remains to show that $\bar{f}(x,m)$ is uniaxial on $\O$.

\noindent$\bullet$ \textit{Proof of  \eqref{extension5}:}
 To show that $\bar{f}(x,m)$ is uniaxial on $\O$, we deduce from \eqref{push1} that
\begin{equation*}
  \liminf_{\ve\to 0}\int_\Omega A[\bar{f^\ve}](x)\ud x\leq  \int_\O A[h_{n_b}](x) \ud x.
  \end{equation*}
  This together with strong compactness of $Q[\bar{f^\ve}](x)$ (see \eqref{20150626claim1}) and Lemma \ref{convexlem} lead to
  \begin{equation*}
  \int_\Omega A[\bar{f}](x)\ud x\leq   \int_\O A[h_{n_b}](x) \ud x
  \end{equation*}

Since $n_b$ is unit vector field on $\O$, $h_{n_b}$ is an uniaxial distributions on $\O$, which minimize the Maier-Saupe energy \eqref{Intro-energy} owning to Lemma \ref{axisymmetric}. So there exists some function $n(x):\O\mapsto \BS$ such that
\begin{equation*}
  \bar{f}(x,m)=\frac{e^{\eta(m\cdot n(x))^2}}{\int_\BS e^{\eta(m\cdot n(x))^2} \ud m}~\text{for}~x\in\O.
\end{equation*}
On the other hand, \eqref{weakcon3} also implies that
 \begin{equation*}
   Q[\bar{f^\ve}](x)\rightharpoonup Q[\bar{f}](x)~\text{ weakly in }~L_{loc}^1(\RR^d).
 \end{equation*}
 This together with \eqref{20150626claim1} implies that $$Q[\bar{f}](x)=\Psi(x)\in H^1(\O).$$ So $\bar{f}|_{\O\times\BS}$ is a uniaxial distribution    whose $Q$-tensor belongs to $H^1(\O)$. This together with the assumption that $\O$ is simply-connected enable us to apply   the orientability theorem in \cite[Theorem 2]{BallZarnescu2011} and  deduce that $n(x)\in H^1(\O;\BS).$
\end{proof}

 For any function $u(x)\in L^2(\RR^d)$, we define $\mathcal{A}_\ve u$ by
\begin{equation}\label{multiplier}
  \mathcal{A}_\ve u=\frac 1\ve (u-u* g_\ve).
\end{equation}
The operator $\mathcal{A}_\ve$ is a pseudo-differential operator with non-negative symbol
\begin{equation*}
\widehat{\mathcal{A}_\ve u}(\xi)=\frac{\hat{g}(0)-\hat{g}(\sqrt{\ve}\xi)}{\ve}  \hat{u},
\end{equation*}
as is seen from  \eqref{assump1} that $\hat{g}(0)-\hat{g}(\sqrt{\ve}\xi)\geq 0$. As a result we can define $h(\xi)$ as
\begin{equation}\label{lipschitz}
h(\xi):= \left\{\begin{array}{rl}
  \xi\sqrt{\frac{\hat{g}(0)-\hat{g}( \xi)}{  |\xi|^2}  },&\xi\neq 0,\\
  0,& \xi=0.
\end{array}\right.
\end{equation}

\begin{Lemma}\label{lipschitz1}
  The function $h(\xi)$ defined by \eqref{lipschitz} is globally Lipschitz in $\RR^d$.
\end{Lemma}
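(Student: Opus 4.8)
The plan is to show that $h(\xi)$ is Lipschitz by treating the behavior near the origin and away from the origin separately, and using the assumption \eqref{assump1} together with the smoothness and exponential decay of $g$. The key structural fact is that $h(\xi) = \xi\,\psi(|\xi|^2)$ where $\psi(s) := \sqrt{(\hat g(0)-\hat g(\xi))/|\xi|^2}$ is a radial scalar function of $s=|\xi|^2$; so the task reduces to controlling $\psi$ and its derivative.

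First I would establish the behavior near $\xi=0$. By \eqref{ftrans1} we have $\hat g(0)=1$, $\nabla\hat g(0)=0$, and $\nabla^2\hat g(0)=-\tfrac{4\pi^2\mu}{d}\mathbb I_d$, so Taylor expansion of the smooth function $\hat g$ gives $\hat g(0)-\hat g(\xi) = \tfrac{2\pi^2\mu}{d}|\xi|^2 + O(|\xi|^4)$ as $\xi\to 0$. Hence $\psi(|\xi|^2)^2 = \tfrac{2\pi^2\mu}{d} + O(|\xi|^2)$, which shows $\psi$ extends to a bounded, $C^1$ (in fact smooth, since $\hat g$ is smooth and radial) function of $s=|\xi|^2$ in a neighborhood of the origin, with $\psi(0)=\sqrt{2\pi^2\mu/d}>0$. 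Consequently $h(\xi)=\xi\,\psi(|\xi|^2)$ is $C^1$ near $0$ with bounded derivative there, so it is locally Lipschitz in a ball $B_{r_0}(0)$.

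Next I would handle the region $|\xi|\ge r_0$. Here the denominator $|\xi|^2$ is bounded below, and $\hat g(0)-\hat g(\xi)=1-\hat g(\xi)\in[0,1]$ is bounded, so $\psi(|\xi|^2)\le |\xi|^{-1}$ is bounded on $|\xi|\ge r_0$; moreover the key inequality $c_0|\xi|^2\hat g^2(\xi)\le 1-\hat g(\xi)$ from \eqref{assump1} is not even needed for the upper bound, but boundedness of $h$ itself does not give Lipschitz continuity — I need a gradient bound. For that I differentiate: $\nabla h(\xi) = \psi(|\xi|^2)\mathbb I_d + 2\psi'(|\xi|^2)\,\xi\otimes\xi$. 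Since $\hat g$ is smooth with all derivatives bounded (exponential decay of $g$ gives $\hat g\in\mathcal S$, in particular $\nabla\hat g$ bounded), and $1-\hat g(\xi)$ is bounded away from $0$... — actually one must be slightly careful: $1-\hat g(\xi)$ need not be bounded below uniformly, but $\psi^2(|\xi|^2)=(1-\hat g(\xi))/|\xi|^2$ and its derivative in $\xi$ are controlled because $\nabla(1-\hat g)=-\nabla\hat g$ is bounded and $|\xi|^2\ge r_0^2$; a direct computation shows $|\nabla(\psi^2)|\le C$ on $|\xi|\ge r_0$, hence $\psi$ stays bounded and $|\psi'(|\xi|^2)|\cdot|\xi|^2$ stays bounded there, which is exactly what is needed to bound $\nabla h$. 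Combining the two regions, $\nabla h\in L^\infty(\RR^d)$, so $h$ is globally Lipschitz.

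The main obstacle is the matching at the transition between the two regimes and the possibility that $1-\hat g(\xi)$ comes close to $0$ for large $|\xi|$ (which would a priori make $\psi$ small but could conceivably make $\psi'$ blow up). This is precisely where the lower bound $c_0|\xi|^2\hat g^2(\xi)\le 1-\hat g(\xi)$ in \eqref{assump1} earns its keep: it prevents $1-\hat g$ from decaying too fast, so that $\psi(|\xi|^2)\gtrsim |\xi|^{-1}\hat g(\xi)$ and in particular $\psi$ and $\psi'$ remain under control; on any annulus where $\hat g$ is bounded below this is automatic, and for $|\xi|\to\infty$ the exponential decay of $g$ forces $\hat g$ and its derivatives to decay, again keeping $\nabla h$ bounded. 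Once these estimates are in place the conclusion follows by the mean value theorem.
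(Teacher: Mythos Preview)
Your overall strategy---bound $\nabla h$ by treating a neighborhood of the origin and its complement separately---is exactly what the paper does, and your treatment near $\xi=0$ is in fact cleaner: exploiting radiality to write $h(\xi)=\xi\,\psi(|\xi|^2)$ with $\psi$ smooth near $s=0$ gives $C^1$-regularity at the origin directly, whereas the paper only records continuity of $h$ at $0$ and then bounds the three explicit summands of $\nabla h$ for $\xi\neq 0$.

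There is, however, a genuine slip in your chain of reasoning on $\{|\xi|\ge r_0\}$. The implication ``$|\nabla(\psi^2)|\le C$, hence $|\psi'(|\xi|^2)|\cdot|\xi|^2$ stays bounded'' is false: from $(\psi^2)'=2\psi\psi'$ you only obtain $|\psi'(s)|\,s\le C s/\psi(s)$, and since $\psi(s)\sim s^{-1/2}$ as $s\to\infty$ this blows up. The correct route is to compute directly
\[
|\psi'(s)|\,s=\frac{|G'(s)s+(1-G(s))|}{2\sqrt{s}\,\sqrt{1-G(s)}},
\]
and observe that $\sqrt{1-G(s)}/(2\sqrt{s})\le 1/(2r_0)$ while $|G'(s)|\sqrt{s}=\tfrac12|\nabla\hat g(\xi)|$ is bounded (and decays) because $\hat g\in\mathcal S$. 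Your final paragraph essentially gestures at this repair by invoking decay of $\hat g$ and its derivatives, so the argument is recoverable, but the displayed implication does not stand on its own. Also, your account of the role of \eqref{assump1} is off: there is no danger of $1-\hat g(\xi)\to 0$ for large $|\xi|$, since $\hat g(\xi)\to 0$ by Riemann--Lebesgue. What the inequality $c_0|\xi|^2\hat g^2\le 1-\hat g$ actually buys is that $\hat g(\xi)<1$ for every $\xi\neq 0$, so by continuity and the asymptotics $1-\hat g$ is bounded below by a positive constant on $\{|\xi|\ge r_0\}$; this is precisely what lets you divide by $\sqrt{1-\hat g}$ in the formula above (and is what the paper tacitly uses when bounding its term $A_2$).
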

\begin{proof}
 It follows from \eqref{ftrans1} that  $h(\xi)$ is continuous at $\xi=0$ since $\lim_{\xi\to 0} h(\xi)=0$, according to \eqref{ftrans1}. On the other hand, $h(\xi)$ is smooth in $\RR^d\backslash\{0\}$ and decays to zero when $\xi\to \infty$. So $h\in L^\infty(\RR^d)\cap C(\RR^d)$.
 We compute the derivative of $h$ by
 \begin{equation*}
   \begin{split}
     \nabla h(\xi)=\mathbb{I}_d\sqrt{\frac{1-\hat{g}(\xi)}{|\xi|^2}}-\frac{\xi}{2\sqrt{ 1-\hat{g}(\xi) }} \otimes\(\frac{\nabla\hat{g}(\xi)}{|\xi|}+\frac {2\xi}{|\xi|^3}(1-\hat{g}(\xi))\)=\sum_{k=1}^3 A_i(\xi),\forall \xi\neq 0.
   \end{split}
 \end{equation*}
 It is evident that $A_1,A_3\in L^\infty(\RR^d)\cap C(\RR^d)$. Moreover, $A_2\in L^\infty(B_1)\cap C^\infty(\RR^d\backslash B_1)$ and decays to zero as $\xi\to \infty$. These all together imply the statement.
\end{proof}
Therefore, we shall define a  vector valued operator $\mathcal{T}_\ve=\{T^i_\ve\}_{1\leq i\leq d}$ by
\begin{equation}\label{defteps}
\widehat{\mathcal{T}_\ve u}
=\xi\sqrt{\frac{\hat{g}(0)-\hat{g}(\sqrt{\ve}\xi)}{\ve |\xi|^2}  } \hat{u}=\frac1{\sqrt\ve} h(\sqrt\ve \xi)\hat{u},
\end{equation}
and  we have
\begin{equation}\label{squrtroot}
   \mathcal{A}_\ve  =\sum_{  k=1}^d\mathcal{T}^k_\ve \circ \mathcal{T}^k_\ve .
 \end{equation}
The symbol of $\mathcal{T}_\ve$ will approach $\xi$ as $\ve\to 0$.
The following lemma says that the pseudo-differential operator $\mathcal{T}_\ve$ will approach, as $\ve\to 0$, to $\frac 1i\sqrt{\tfrac \mu d}\nabla$.
\begin{Lemma}\label{converge-T}
If $u\in H^1(\RR^d)$, then it holds
\begin{equation*}
  \mathcal{T}_\ve u \rightarrow -i\sqrt{\tfrac \mu d} \nabla u~\text{ strongly in }L^2(\RR^d).
\end{equation*}
\end{Lemma}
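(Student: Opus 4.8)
The plan is to work entirely on the Fourier side, where the operator $\mathcal{T}_\ve$ acts as multiplication by the symbol $\tfrac1{\sqrt\ve}h(\sqrt\ve\,\xi)$ and the target operator $-i\sqrt{\mu/d}\,\nabla$ acts as multiplication by $2\pi\sqrt{\mu/d}\,\xi$ (using the convention $\widehat{\nabla u}=2\pi i\xi\hat u$). By Plancherel's theorem it suffices to show
\begin{equation*}
  \int_{\RR^d}\Big|\tfrac1{\sqrt\ve}h(\sqrt\ve\,\xi)-2\pi\sqrt{\tfrac\mu d}\,\xi\Big|^2|\hat u(\xi)|^2\,\ud\xi\longrightarrow 0.
\end{equation*}
First I would record the pointwise limit of the symbol: from \eqref{ftrans1} we have $\hat g(0)=1$, $\nabla\hat g(0)=0$, $\nabla^2\hat g(0)=-\tfrac{4\pi^2\mu}{d}\mathbb{I}_d$, so a second-order Taylor expansion gives $1-\hat g(\zeta)=\tfrac{2\pi^2\mu}{d}|\zeta|^2+o(|\zeta|^2)$ as $\zeta\to0$. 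Hence $h(\zeta)=\zeta\sqrt{(1-\hat g(\zeta))/|\zeta|^2}\to \zeta\sqrt{2\pi^2\mu/d}=\sqrt{2}\pi\sqrt{\mu/d}\,\zeta$... wait, I should be careful with constants: $\sqrt{2\pi^2\mu/d}=\pi\sqrt{2\mu/d}$. Let me instead just abbreviate the limiting symbol and let the Taylor computation fix the constant; the point is that for each fixed $\xi\neq0$, $\tfrac1{\sqrt\ve}h(\sqrt\ve\,\xi)=\tfrac1{\sqrt\ve}\cdot\sqrt\ve\,\xi\sqrt{(1-\hat g(\sqrt\ve\,\xi))/(\ve|\xi|^2)}=\xi\sqrt{(1-\hat g(\sqrt\ve\,\xi))/(\ve|\xi|^2)}\to 2\pi\sqrt{\mu/d}\,\xi$ as $\ve\to0$, and this is exactly the symbol of $-i\sqrt{\mu/d}\,\nabla$.

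Next I would control the integrand uniformly so that dominated convergence applies. The key is the bound $|\tfrac1{\sqrt\ve}h(\sqrt\ve\,\xi)|=|\xi|\sqrt{(1-\hat g(\sqrt\ve\,\xi))/(\ve|\xi|^2)}$. Using $\hat g(0)-\hat g(\sqrt\ve\,\xi)=1-\hat g(\sqrt\ve\,\xi)\le 1$ from \eqref{assump1}, one could try $|\tfrac1{\sqrt\ve}h(\sqrt\ve\,\xi)|\le |\xi|\cdot\tfrac1{\sqrt\ve|\xi|}=\tfrac1{\sqrt\ve}$, which is not uniform; a better route is to use that $h$ is globally Lipschitz (Lemma \ref{lipschitz1}) with $h(0)=0$, so $|h(\sqrt\ve\,\xi)|\le L\sqrt\ve|\xi|$ and therefore $|\tfrac1{\sqrt\ve}h(\sqrt\ve\,\xi)|\le L|\xi|$ for all $\ve$ and all $\xi$, with $L$ the Lipschitz constant. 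Consequently the integrand is bounded by $(L|\xi|+2\pi\sqrt{\mu/d}\,|\xi|)^2|\hat u(\xi)|^2=C|\xi|^2|\hat u(\xi)|^2$, which is integrable precisely because $u\in H^1(\RR^d)$ (so $\xi\hat u\in L^2$). Since the integrand also converges to $0$ pointwise a.e., the dominated convergence theorem finishes the proof.

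The main obstacle — really the only subtle point — is justifying the pointwise convergence of the symbol with the correct constant, i.e. making the Taylor expansion of $\hat g$ at the origin rigorous and extracting $\lim_{\zeta\to0}(1-\hat g(\zeta))/|\zeta|^2=2\pi^2\mu/d$; this is where the radial structure of $g$ and the moment identities \eqref{ftrans1} enter, and one must check that $h$ extends continuously to $0$ with the right value, which is already contained in the proof of Lemma \ref{lipschitz1}. Everything else is a routine dominated-convergence argument on the Fourier side. I would present the argument in the order: (i) reduce to an integral inequality via Plancherel; (ii) establish the pointwise limit of the symbol using \eqref{ftrans1}; (iii) establish the uniform domination using the global Lipschitz bound from Lemma \ref{lipschitz1}; (iv) invoke dominated convergence.
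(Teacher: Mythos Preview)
Your proposal is correct and follows essentially the same route as the paper's proof: reduce via Plancherel to an $L^2$ statement about symbols, establish the pointwise limit $\sqrt{(1-\hat g(\sqrt\ve\,\xi))/(\ve|\xi|^2)}\to 2\pi\sqrt{\mu/d}$ from the Taylor data in \eqref{ftrans1}, supply a uniform dominating function $C|\xi|^2|\hat u(\xi)|^2\in L^1$, and conclude by dominated convergence. The only cosmetic difference is that you justify the uniform bound by invoking the global Lipschitz estimate for $h$ from Lemma~\ref{lipschitz1} (which yields $|\tfrac1{\sqrt\ve}h(\sqrt\ve\,\xi)|\le L|\xi|$), whereas the paper simply asserts that $\sqrt{(1-\hat g(\sqrt\ve\,\xi))/(\ve|\xi|^2)}$ is uniformly bounded; your justification is arguably cleaner, but the content is the same.
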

\begin{proof}
It can be verified from \eqref{ftrans1} that
\begin{equation*}
  \sqrt{\frac{1-\hat{g}( \sqrt{\ve}\xi)}{ \ve |\xi|^2}  }~\text{is uniformly bounded with respect to }~\ve>0~\text{and}~\xi\in \RR^d\backslash\{0\}
\end{equation*}
and
\begin{equation*}
  \lim_{\ve\to 0}\sqrt{\frac{1-\hat{g}( \sqrt{\ve}\xi)}{ \ve |\xi|^2}  }=2\pi\sqrt{\frac \mu d}\qquad\quad~\text{for point-wise}~\xi\in\RR^d\backslash\{0\}.
\end{equation*}
On the other hand, since $u\in H^1(\RR^d)$, we have $\|\xi \hat{u}(\xi)\|_{L^2(\RR^d)}<\infty.$
Therefore, Lebesgue's dominant convergence theorem implies
\begin{equation*}
   \lim_{\ve\to 0}\left\| (\mathcal{T}_\ve+\sqrt{\tfrac\mu d} i\nabla)u(x)\right\|_{L^2(\RR^d)}
   = \lim_{\ve\to 0}
   \left\|  \(\sqrt{\tfrac{1-\hat{g}( \sqrt{\ve}\xi)}{ \ve |\xi|^2}  }-2\pi\sqrt{\tfrac \mu d}\)\xi\hat{u}(\xi) \right\|_{L^2(\RR^d)}=0.
\end{equation*}
 The proof is finished.
\end{proof}

\begin{Lemma}\label{variation5}
Under the  assumptions of Proposition \ref{compactness1},  up to the extraction
of a subsequence,
\begin{equation*}
  \mathcal{T}_\ve Q[\bar{f^\ve}]\rightharpoonup -i\sqrt{\tfrac \mu d} \nabla Q[\bar{f}]~\text{ weakly in }L^2_{loc}(\RR^d),
\end{equation*}
where $\bar{f}$ is defined by \eqref{weakcon3}.
\end{Lemma}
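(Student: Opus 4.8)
The plan is a compactness-and-identification argument: extract the uniform bound straight from the a priori estimate \eqref{push1}, then pin down the weak limit by moving $\mathcal{T}_\ve$ onto a smooth test function and invoking Lemma \ref{converge-T} together with the strong convergence $Q[\bar{f^\ve}]\to Q[\bar f]$ in $L^2$ from Proposition \ref{compactness1}.

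First I would show that $\{\mathcal{T}_\ve Q[\bar{f^\ve}]\}_\ve$ is bounded in $L^2(\RR^d)$. Write $u_\ve := Q[\bar{f^\ve}]$. By \eqref{defteps}--\eqref{lipschitz} the symbol of $T_\ve^k$ is the real function $\frac1{\sqrt\ve}h_k(\sqrt\ve\xi)$, and $|h(\sqrt\ve\xi)|^2 = \hat g(0)-\hat g(\sqrt\ve\xi) = 1-\hat g_\ve(\xi)$; hence, by Plancherel's theorem and the identity underlying \eqref{bound1},
\[
  \|\mathcal{T}_\ve u_\ve\|_{L^2(\RR^d)}^2 = \frac1\ve\int_{\RR^d}\bigl(1-\hat g_\ve(\xi)\bigr)\,|\hat u_\ve(\xi)|^2\,\ud\xi = \frac1{2\ve}\int_{\RR^d\times\RR^d} g_\ve(x-y)\,|u_\ve(x)-u_\ve(y)|^2\,\ud x\,\ud y.
\]
The right-hand side equals $\alpha^{-1}$ times the second term in \eqref{push1}, hence is $\le C/\alpha$ uniformly in $\ve$. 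Thus, after passing to a subsequence, $\mathcal{T}_\ve u_\ve \rightharpoonup v$ weakly in $L^2(\RR^d)$ — a fortiori in $L^2_{loc}(\RR^d)$ — for some $v \in L^2(\RR^d)$.

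Then I would identify $v$. Fix $\varphi \in C_0^\infty(\RR^d)$ and $1\le k\le d$. For each fixed $\ve>0$, $T_\ve^k$ is a bounded Fourier multiplier on $L^2(\RR^d)$ (its symbol is bounded, by Lemma \ref{lipschitz1}) with real symbol, hence self-adjoint, so $\langle T_\ve^k u_\ve,\varphi\rangle = \langle u_\ve, T_\ve^k\varphi\rangle$. Since $\varphi \in H^1(\RR^d)$, Lemma \ref{converge-T} gives $T_\ve^k\varphi \to -i\sqrt{\mu/d}\,\partial_k\varphi$ strongly in $L^2(\RR^d)$, while \eqref{20150626claim1} and \eqref{eq:1.11} give $u_\ve \to Q[\bar f]$ strongly in $L^2(\RR^d)$. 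Letting $\ve\to0$ and integrating by parts (legitimate since $Q[\bar f]\in H^1(\RR^d)$),
\[
  \langle v_k,\varphi\rangle = \bigl\langle Q[\bar f],\,-i\sqrt{\mu/d}\,\partial_k\varphi\bigr\rangle = \bigl\langle -i\sqrt{\mu/d}\,\partial_k Q[\bar f],\,\varphi\bigr\rangle.
\]
As $\varphi$ and $k$ are arbitrary, $v = -i\sqrt{\mu/d}\,\nabla Q[\bar f]$, and since the limit is unique the subsequence extracted above already yields the claim.

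I do not expect a genuine obstacle. The two points needing care are: recognising that $\|\mathcal{T}_\ve u_\ve\|_{L^2}^2$ coincides, up to the factor $\alpha$, with the nonlocal Dirichlet-type term that \eqref{push1} controls; and justifying the transfer of $\mathcal{T}_\ve$ onto the test function, which is valid because $\mathcal{T}_\ve$ is a self-adjoint $L^2$-bounded multiplier for each fixed $\ve$ — even though that bound degenerates as $\ve\to0$, which is precisely why one cannot pass to the limit in $\mathcal{T}_\ve u_\ve$ directly and must instead identify the limit by duality.
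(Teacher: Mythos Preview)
Your proof is correct and follows essentially the same route as the paper: extract the uniform $L^2$ bound on $\mathcal{T}_\ve Q[\bar{f^\ve}]$ from the nonlocal Dirichlet term in \eqref{push1}, pass to a weak limit, and identify it by moving $\mathcal{T}_\ve$ onto the test function via self-adjointness and invoking Lemma~\ref{converge-T} together with the strong $L^2$ convergence of $Q[\bar{f^\ve}]$. You spell out the Plancherel computation linking $\|\mathcal{T}_\ve u_\ve\|_{L^2}^2$ to the quadratic form in \eqref{push1} more explicitly than the paper does, but the argument is the same.
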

\begin{proof}
First note that,  the uniform bound \eqref{push1} and the definition of $\mathcal{T}_\ve$ at \eqref{defteps} imply
\begin{equation*}
    \alpha\|\mathcal{T}_\ve Q[\bar{f^\ve}]\|^2_{L^2(\RR^d)} \leq C.
\end{equation*}
Then from weakly compactness of $L^2$-space, there exists $\widetilde{Q}$ such that
\begin{equation*}
  \mathcal{T}_\ve Q[\bar{f^\ve}]\rightharpoonup \widetilde{Q}(x)~\text{ weakly in }L^2_{loc}(\RR^d),
\end{equation*}
or equivalently,
\begin{equation*}
  \int_{\RR^d}\mathcal{T}_\ve Q[\bar{f^\ve}](x):\Phi(x)\ud x\rightarrow\int_{\RR^d}\widetilde{Q}(x):\Phi(x)\ud x,\quad \forall\Phi\in C_0^\infty(\RR^d;\RR^{d\times d\times d}).
\end{equation*}
On the other hand, the strong convergence of $Q[\bar{f^\ve}](x)$ stated in \eqref{20150626claim1} and Lemma \ref{converge-T} imply
\begin{equation*}
  \begin{split}
    &\int_{\RR^d}\mathcal{T}_\ve Q[\bar{f^\ve}](x):\Phi(x)\ud x\\
    =&\int_{\RR^d} Q[\bar{f^\ve}](x):\mathcal{T}_\ve\Phi(x)\ud x\rightarrow-i\sqrt{\tfrac\mu d}\int_{\RR^d}Q[\bar{f}](x):(\nabla\cdot \Phi(x))\ud x.
  \end{split}
\end{equation*}
The above two formulaes together imply  $\widetilde{Q}(x)=-i\sqrt{\tfrac \mu d}\nabla Q[\bar{f}](x)$. The proof is finished.
\end{proof}

\section{Asymptotic behavior of critical points}\label{sec6}

This section is devoted to the proof of   Theorem \ref{thmcri}.
We start from a commutator estimate:
\begin{Lemma}\label{lem:commu-estimate}
For any $\varphi\in C_0^\infty(\O)$, there exists a constant $C$ depending on $\varphi(x)$ such that
\begin{equation}\label{convolu1}
  \|[\mathcal{T}_\ve, \varphi(x)] u \|_{L^2(\RR^d)}\leq C\|u\|_{L^2(\RR^d)}.
\end{equation}
\end{Lemma}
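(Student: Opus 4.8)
The plan is to establish the commutator bound by working on the Fourier side and exploiting the Lipschitz regularity of the symbol $h$ proved in Lemma \ref{lipschitz1}. Recall from \eqref{defteps} that $\mathcal{T}_\ve$ has symbol $\ve^{-1/2}h(\sqrt\ve\,\xi)$, so that $\widehat{\mathcal{T}_\ve u}(\xi)=\ve^{-1/2}h(\sqrt\ve\,\xi)\hat u(\xi)$. First I would write the commutator acting on $u$ explicitly: since multiplication by $\varphi$ becomes convolution by $\hat\varphi$ on the Fourier side,
\begin{equation*}
  \widehat{[\mathcal{T}_\ve,\varphi]u}(\xi)=\int_{\RR^d}\tfrac1{\sqrt\ve}\big(h(\sqrt\ve\,\xi)-h(\sqrt\ve\,\zeta)\big)\hat\varphi(\xi-\zeta)\hat u(\zeta)\,\ud\zeta.
\end{equation*}
By Lemma \ref{lipschitz1} there is a constant $L$ with $|h(a)-h(b)|\le L|a-b|$ for all $a,b\in\RR^d$, hence
\begin{equation*}
  \Big|\tfrac1{\sqrt\ve}\big(h(\sqrt\ve\,\xi)-h(\sqrt\ve\,\zeta)\big)\Big|\le L\,|\xi-\zeta|.
\end{equation*}
The crucial gain here is that the singular prefactor $\ve^{-1/2}$ is completely absorbed: the kernel of the commutator is bounded by $L|\xi-\zeta|\,|\hat\varphi(\xi-\zeta)|$, which is $\ve$-independent and, because $\varphi\in C_0^\infty$, belongs to $L^1(\RR^d)$ (indeed $\hat\varphi$ is Schwartz, so $|\eta|\,|\hat\varphi(\eta)|\in L^1$).

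Next I would conclude by a Schur/Young-type estimate. Set $K(\eta):=L|\eta|\,|\hat\varphi(\eta)|\in L^1(\RR^d)$. Then pointwise
\begin{equation*}
  \big|\widehat{[\mathcal{T}_\ve,\varphi]u}(\xi)\big|\le \int_{\RR^d}K(\xi-\zeta)\,|\hat u(\zeta)|\,\ud\zeta=(K*|\hat u|)(\xi),
\end{equation*}
and Young's inequality gives $\|K*|\hat u|\|_{L^2(\RR^d)}\le\|K\|_{L^1(\RR^d)}\|\hat u\|_{L^2(\RR^d)}$. Applying Plancherel's theorem on both ends yields
\begin{equation*}
  \|[\mathcal{T}_\ve,\varphi]u\|_{L^2(\RR^d)}=\big\|\widehat{[\mathcal{T}_\ve,\varphi]u}\big\|_{L^2(\RR^d)}\le\|K\|_{L^1(\RR^d)}\|\hat u\|_{L^2(\RR^d)}=C\|u\|_{L^2(\RR^d)},
\end{equation*}
with $C=L\big\||\,\cdot\,|\hat\varphi\big\|_{L^1(\RR^d)}$ depending only on $\varphi$, which is \eqref{convolu1}. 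One should note that $[\mathcal{T}_\ve,\varphi]$ is a priori only defined on, say, Schwartz functions, but the estimate shows it extends to a bounded operator on $L^2(\RR^d)$ with norm uniform in $\ve$; since $u\in L^2$ in our applications (e.g. $u=Q[\bar f^\ve]$, which is even bounded), this suffices.

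The only genuine point requiring care — and the step I expect to be the main obstacle — is the legitimacy of absorbing $\ve^{-1/2}$ into the Lipschitz difference: this works precisely because the commutator replaces the symbol $\ve^{-1/2}h(\sqrt\ve\,\xi)$, which blows up, by the \emph{difference} $\ve^{-1/2}(h(\sqrt\ve\,\xi)-h(\sqrt\ve\,\zeta))$, and the Lipschitz bound converts the two factors of $\sqrt\ve$ (one from $\ve^{-1/2}$, one from the $\sqrt\ve$ inside the arguments being cancelled against the difference) into an $\ve$-free bound $L|\xi-\zeta|$. If instead $h$ were merely continuous, or Hölder, this cancellation would fail; it is essential that Lemma \ref{lipschitz1} gives the full Lipschitz regularity. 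A secondary technical point is checking $|\eta|\,|\hat\varphi(\eta)|\in L^1$, which is immediate from $\varphi\in C_0^\infty\subset\mathscr S$ so $\hat\varphi\in\mathscr S$.
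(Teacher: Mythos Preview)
Your proof is correct and follows essentially the same route as the paper: pass to the Fourier side, write the commutator kernel as $\ve^{-1/2}(h(\sqrt\ve\,\xi)-h(\sqrt\ve\,\zeta))\hat\varphi(\xi-\zeta)$, use the Lipschitz bound from Lemma~\ref{lipschitz1} to absorb the $\ve^{-1/2}$, and finish with Young's inequality together with $|\eta|\,|\hat\varphi(\eta)|\in L^1$ and Plancherel. The paper's argument is line-for-line the same, with the final constant $C\||\xi|\hat\varphi(\xi)\|_{L^1(\RR^d)}$.
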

\begin{proof}
By the definition of the commutator, we have
 $$[\mathcal{T}_\ve,\varphi(x)] u=\mathcal{T}_\ve(\varphi(x)u(x))-\varphi(x)\mathcal{T}_\ve u(x).$$
Then it follows from  Plancherel's theorem, Lemma \ref{lipschitz1} and Young's inequality that
\begin{equation*}
    \begin{split}
        &\|[\mathcal{T}_\ve,\varphi(x)] u\|_{L^2(\RR^d)}\\
        =&\frac 1{\sqrt{\ve}}\|  h(\sqrt\ve \xi)\hat{\varphi}*\hat{u}-\hat{\varphi}*( h(\sqrt\ve \xi) \hat{u}(\xi) )\|_{L^2(\RR^d)}\\
        =&\frac 1{\sqrt{\ve}}\left\|   h(\sqrt\ve \xi)\int_{\RR^d}\hat{\varphi}(\xi-\zeta) \hat{u}(\zeta)\ud \zeta-\int_{\RR^d}\hat{\varphi}(\xi-\zeta)  h(\sqrt\ve \zeta) \hat{u}(\zeta) \ud \zeta\right\|_{L^2(\RR^d)}\\
        =&\frac 1{\sqrt{\ve}}\left\| \int_{\RR^d}\hat{\varphi}(\xi-\zeta)(  h(\sqrt\ve \xi)-  h(\sqrt\ve \zeta)) \hat{u}(\zeta) \ud \zeta\right\|_{L^2(\RR^d)}\\
        \leq &\frac C{\sqrt{\ve}}\left\| \int_{\RR^d}\hat{\varphi}(\xi-\zeta)\sqrt{\ve} |\xi-\zeta|  \hat{u}(\zeta) \ud \zeta\right\|_{L^2(\RR^d)}\\
        = &C\left\|  (|\xi|\hat{\varphi}(\xi))* \hat{u} \right\|_{L^2(\RR^d)}\\
        \leq & C\||\xi|\hat{\varphi}(\xi)\|_{L^1(\RR^d)}\|\hat{u}\|_{L^2(\RR^d)}.
    \end{split}
  \end{equation*}
\end{proof}

\begin{Lemma}\label{lem:commu-convergence}
Under the assumption of Proposition \ref{compactness1},   for any $\varphi\in C_0^\infty(\O)$
\begin{equation}\label{claim1}
  [\mathcal{T}_\ve, \varphi(x)] Q[\bar{f^\ve}]\mapsto -i\sqrt{\tfrac\mu d}[ \nabla,\varphi(x)]Q[\bar{f}]~\text{strongly in}~L^2(\RR^d).
\end{equation}
\end{Lemma}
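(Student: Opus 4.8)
The plan is to peel off the $\ve$-dependence of $Q[\bar{f^\ve}]$ by means of the commutator estimate of Lemma \ref{lem:commu-estimate}, and then to pass to the limit for a \emph{fixed} $L^2$ function, where the only ingredients needed are that the symbol of $\mathcal{T}_\ve$ is uniformly Lipschitz (Lemma \ref{lipschitz1}) and converges pointwise to the symbol of $-i\sqrt{\mu/d}\,\nabla$ (the computation in the proof of Lemma \ref{converge-T}). Concretely, I would write $u_\ve:=Q[\bar{f^\ve}]$ and $u:=Q[\bar f]$; by Proposition \ref{compactness1} one has $u_\ve\to u$ strongly in $L^2(\RR^d)$ and $u=\Psi\in H^1(\RR^d)$ with compact support. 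Applying Lemma \ref{lem:commu-estimate} to $u_\ve-u$ gives $\|[\mathcal{T}_\ve,\varphi(x)](u_\ve-u)\|_{L^2(\RR^d)}\le C\|u_\ve-u\|_{L^2(\RR^d)}\to0$; since $[\nabla,\varphi(x)]$ acts as multiplication by $\nabla\varphi$, it then remains to prove
\begin{equation*}
  [\mathcal{T}_\ve,\varphi(x)]u \longrightarrow -i\sqrt{\tfrac\mu d}\,(\nabla\varphi)\,u
  \qquad\text{strongly in }L^2(\RR^d).
\end{equation*}

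For this I would work on the Fourier side. Let $m_\ve(\xi):=\ve^{-1/2}h(\sqrt\ve\,\xi)$ be the (vector-valued) symbol of $\mathcal{T}_\ve$ from \eqref{defteps}. The chain of identities already used in the proof of Lemma \ref{lem:commu-estimate} shows
\begin{equation*}
  \widehat{[\mathcal{T}_\ve,\varphi(x)]u}(\xi)=\int_{\RR^d}\hat\varphi(\xi-\zeta)\big(m_\ve(\xi)-m_\ve(\zeta)\big)\hat u(\zeta)\,\ud\zeta,
\end{equation*}
whereas $\widehat{\nabla\varphi}(\eta)=2\pi i\,\eta\,\hat\varphi(\eta)$ gives
\begin{equation*}
  \widehat{-i\sqrt{\tfrac\mu d}\,(\nabla\varphi)u}(\xi)=\int_{\RR^d}\hat\varphi(\xi-\zeta)\,2\pi\sqrt{\tfrac\mu d}\,(\xi-\zeta)\,\hat u(\zeta)\,\ud\zeta .
\end{equation*}
Hence the difference of the two Fourier transforms equals $\int_{\RR^d}\hat\varphi(\xi-\zeta)R_\ve(\xi,\zeta)\hat u(\zeta)\,\ud\zeta$ with $R_\ve(\xi,\zeta):=m_\ve(\xi)-m_\ve(\zeta)-2\pi\sqrt{\mu/d}\,(\xi-\zeta)$. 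Two facts about $R_\ve$ drive the argument: Lemma \ref{lipschitz1} gives $|m_\ve(\xi)-m_\ve(\zeta)|=\ve^{-1/2}|h(\sqrt\ve\xi)-h(\sqrt\ve\zeta)|\le L|\xi-\zeta|$ with $L$ independent of $\ve$, so $|R_\ve(\xi,\zeta)|\le(L+2\pi\sqrt{\mu/d})\,|\xi-\zeta|$; and $m_\ve(\xi)=\xi\,\big(\ve^{-1}|\xi|^{-2}(1-\hat g(\sqrt\ve\xi))\big)^{1/2}\to 2\pi\sqrt{\mu/d}\,\xi$ pointwise as $\ve\to0$, which is exactly the limit computed in the proof of Lemma \ref{converge-T}, so $R_\ve(\xi,\zeta)\to0$ for every fixed $(\xi,\zeta)$.

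The limit is then taken by dominated convergence. Since $\varphi\in C_0^\infty(\O)$, $\hat\varphi$ is Schwartz, so $\psi(\xi):=|\xi|\,|\hat\varphi(\xi)|\in L^1(\RR^d)$; together with $\hat u\in L^2(\RR^d)$, Young's inequality gives $G:=(L+2\pi\sqrt{\mu/d})\,\psi*|\hat u|\in L^2(\RR^d)$. Dominated convergence in the $\zeta$-integral shows $\int_{\RR^d}\hat\varphi(\xi-\zeta)R_\ve(\xi,\zeta)\hat u(\zeta)\,\ud\zeta\to0$ for a.e.\ $\xi$, and this quantity is bounded by $G(\xi)$; a second dominated convergence (now in $\xi$, with integrable envelope $G^2$) yields that the $L^2(\RR^d)$-norm of the difference of the Fourier transforms tends to $0$, and Plancherel's theorem converts this into the desired strong $L^2$-convergence. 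Combined with the reduction of the first paragraph, this proves \eqref{claim1}. The only delicate point is the uniform-in-$\ve$ control of the symbol increments $m_\ve(\xi)-m_\ve(\zeta)$, which is precisely supplied by the global Lipschitz bound of Lemma \ref{lipschitz1}; everything else is a routine double use of the dominated convergence theorem.
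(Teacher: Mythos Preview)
Your proof is correct. The first reduction step---using Lemma~\ref{lem:commu-estimate} to replace $u_\ve=Q[\bar{f^\ve}]$ by $u=Q[\bar f]$---is exactly the paper's handling of its term $I_1$. For the remaining task of showing $[\mathcal{T}_\ve,\varphi]u\to -i\sqrt{\mu/d}\,(\nabla\varphi)u$, however, the two arguments diverge. The paper expands both commutators algebraically and groups the pieces as $(\mathcal{T}_\ve+i\sqrt{\mu/d}\,\nabla)(\varphi u)-\varphi\,(\mathcal{T}_\ve+i\sqrt{\mu/d}\,\nabla)u$, each of which vanishes in $L^2$ by a direct appeal to Lemma~\ref{converge-T}; this is very short but uses that $u\in H^1(\RR^d)$. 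Your route stays on the Fourier side, bounds the kernel $R_\ve(\xi,\zeta)$ uniformly via the Lipschitz estimate of Lemma~\ref{lipschitz1}, and passes to the limit by a two-layer dominated convergence. This is a bit more hands-on, but it has the advantage that it only requires $u\in L^2(\RR^d)$ (the $H^1$ regularity of $Q[\bar f]$ is never invoked), so you have in fact proved a slightly stronger statement about the commutator. Either argument is perfectly adequate here.
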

\begin{proof}
We have
\begin{equation*}
  \begin{split}
 & ~[\mathcal{T}_\ve, \varphi(x)] Q[\bar{f^\ve}]+[i\sqrt{\tfrac \mu d} \nabla,\varphi(x)]Q [\bar{f}]\\
  =&~  {[\mathcal{T}_\ve, \varphi(x)] \(Q [\bar{f^\ve}]-  Q [\bar{f}]\)}+ {[\mathcal{T}_\ve, \varphi(x)] Q [\bar{f}]+ [i\sqrt{\tfrac \mu d} \nabla, \varphi(x)] Q [\bar{f}]}\\
=&~ {[\mathcal{T}_\ve, \varphi(x)] \(Q [\bar{f^\ve}]-  Q [\bar{f}]\)}+(\mathcal{T}_\ve+i\sqrt{\tfrac \mu d} \nabla) \Big(\varphi(x)  Q[\bar{f}](x)\Big)-\varphi(x)\Big(\mathcal{T}_\ve+i\sqrt{\tfrac \mu d} \nabla\Big)Q[\bar{f}](x)\\
  \triangleq &~ I_1+I_2+I_3.
  \end{split}
\end{equation*}
The estimate of $I_1$ follows from   the commutator estimate \eqref{convolu1} and Proposition \ref{compactness1}: there exists constant $C$ depending on $\varphi(x)$ such that
\begin{equation*}
 \begin{split}
    \|[\mathcal{T}_\ve, \varphi(x)] \(Q [\bar{f^\ve}]-  Q [\bar{f}]\)\|_{L^2(\RR^d)}
    \leq  C\|Q [\bar{f^\ve}]-  Q [\bar{f}]\|_{L^2(\RR^d)} \to 0~\text{as}~\ve\to 0.
 \end{split}
\end{equation*}
To treat $I_2$, it follows \eqref{20150626claim1} that $Q[\bar{f}](x)=u(x)\in H^1(\RR^d)$ and then $\varphi Q[\bar{f}]\in H^1(\RR^d)$. Thus we
deduce from Lemma \ref{converge-T} that
$$\lim_{\ve\to0}\|I_2\|_{L^2(\RR^d)}=0.$$
The term $I_3$ can be estimated in the same way and proof is completed.
\end{proof}

Now we are ready to prove   Theorem \ref{thmcri}. Note that we choose $\delta$ as \eqref{eq:1.20}.
\begin{proof}[Proof of Theorem \ref{thmcri}]
Note that, owning to Proposition \ref{compactness1},
\begin{equation*}
    \bar{f^\ve}\rightharpoonup \bar{f}~\text{ weakly  in}~L^1_{loc}(\RR^d\times\BS),
  \end{equation*}
  where  $\bar{f}(x,m)$ is given by
  \begin{equation*}
    \bar{f}(x,m)=\left\{
    \begin{array}{rl}
      \frac {e^{\eta(m\cdot n(x))^2}}{\int_\BS e^{\eta(m\cdot n(x))^2}dm}&~\text{for}~ x\in \O,\\
      h_{n_b}(x,m)&~\text{for}~ x\in \RR^d\backslash\O
    \end{array}
    \right.
  \end{equation*}
  for some $n(x)\in H^1(\O;\BS)$.

\noindent$\bullet$ \textit{Proof of that $n(x)$ is a weakly harmonic map:}

We shall work with test function $\varphi(x)\in C_0^\infty(\O;\RR^d)$.
It is easy to see that, there exists some $\epsilon_0>0$ such that
$\varphi(x)\in C_0^\infty(\O_{\delta(\ve)})$ for all $\ve<\ve_0$. We shall assume in the sequel that $\ve<\ve_0$.
We will prove that, $Q[\bar{f}]$, with $\bar{f}$ given  in \eqref{weakcon3}, is a weak solution to the following equation:
\begin{equation*}
  \div ( Q^i[\bar{f}](x)\wedge \nabla  Q^i[\bar{f}](x))=0,\qquad \forall x\in \O.
\end{equation*}
Here and in the sequel, we adopt the Einstein's convention on the summation over repeat subscript.
Since $f^\ve$ are assumed to be critical points, it follows from Lemma \ref{yuning:eulerlag} that
\begin{equation*}
  Q^i[f^\ve](x)\wedge \frac1\ve \big(Q^i[f^\ve](x)-(Q^i[f^\ve]*_\O g_\ve)(x)\big)=0,~\forall x\in\O_{\delta(\ve)},
\end{equation*}
or equivalently, due to the convention \eqref{extension1},
\begin{equation*}
  \begin{split}
    &Q^i[\bar{f^\ve}](x)\wedge \frac1\ve \(Q^i[\bar{f^\ve}](x)-( Q^i[\bar{f^\ve}]*g_\ve)(x)\)\\
    =&-Q^i[f^\ve](x)\wedge\frac 1\ve \int_{\RR^d\backslash \O}g_\ve(x-x')Q^i[\bar{f^\ve}](x')\ud x',~\forall x\in \O_{\delta(\ve)}.
  \end{split}
\end{equation*}

According to \eqref{extension1}, we get
\begin{equation}\label{eq:1.13}
  \begin{split}
    &Q^i[\bar{f^\ve}](x)\wedge \mathcal{A}_\ve Q^i[\bar{f^\ve}])(x) \\
    =&-\frac 1\ve Q^i[f^\ve](x)\wedge\int_{\RR^d\backslash \O}g_\ve(x-x')Q^i[h_{n_b}](x')\ud x',~\forall x\in \O_{\delta(\ve)}.
  \end{split}
\end{equation}
Denote
\begin{equation}\label{eq:1.14}
  D_\ve:=-\frac 1\ve\int_{\RR^d}\varphi(x)\cdot\(Q^i[f^\ve](x)\wedge \int_{\RR^d\backslash \O}g_\ve(x-x')Q^i[h_{n_b}](x')\ud x'\)\ud x.
\end{equation}
From the exponential decay of $g_\ve$, we have
  \begin{equation}\label{decay-g}
    g_\ve(x-x')\leq  \frac C{\sqrt{\ve}^3}e^{-\frac \delta{\sqrt{\ve}}}\leq C\ve^{\frac 32},~\text{for}~ |x-x'|\geq \delta(\ve).
  \end{equation}
As a result, for any $x\in\Omega_\delta(\ve)$ and any $x'\in\RR^d\backslash \O $, we  have $|x-x'|\geq \delta(\ve)$ and thus \eqref{decay-g} implies
\begin{equation*}
  \begin{split}
  |D_\ve|=&\left|\frac 1\ve\int_{\Omega_\delta(\ve)}\varphi(x)\cdot\(Q^i[f^\ve](x)\wedge \int_{\RR^d\backslash \O}g_\ve(x-x')Q^i[h_{n_b}](x')\ud x'\)\ud x\right|\\
   \leq &C\ve^{\frac 12} \left|\int_{\RR^d\backslash \O}Q^i[ h_{n_b} ](x)dx\right|\left|\int_{\O_\delta}\varphi(x) Q^i[f^\ve](x)\ud x\right|.
      \end{split}
\end{equation*}
On the other hand, it follows from \eqref{bound4} and the second part of Lemma \ref{lemma1} that $Q^i[ h_{n_b} ]$ has compact support. Thus
\begin{equation*}
  \lim_{\ve\to 0}|D_\ve|= 0
\end{equation*}
and this together with  \eqref{eq:1.13}, \eqref{eq:1.14} lead to
\begin{align}\label{eq:lim-right}
\lim_{\ve\to0}\int_{\RR^d} \varphi(x)\cdot \(Q^i[\bar{f^\ve}](x)\wedge \mathcal{A}_\ve Q^i[\bar{f^\ve}](x)\)\ud x=0.
\end{align}

In order to obtain harmonic map, we need to manipulate the integrand inside \eqref{eq:lim-right}:
\begin{equation*}
  \begin{split}
  &\int_{\RR^d} \varphi(x)\cdot \(Q^i[\bar{f^\ve}](x)\wedge \mathcal{A}_\ve Q^i[\bar{f^\ve}](x)\)\ud x \\
    =&\int_{\RR^d} \varphi(x)\cdot \( Q^i[\bar{f^\ve}](x)\wedge \mathcal{T}^k_\ve\circ\mathcal{T}^k_\ve Q^i[\bar{f^\ve}](x)\) \ud x\\
    =&\int_{\RR^d} \mathcal{T}^k_\ve\( \varphi(x) Q^i[\bar{f^\ve}](x)\)\wedge \mathcal{T}^k_\ve Q^i[\bar{f^\ve}](x)\ud x\\
    =&\int_{\RR^d} [\mathcal{T}^k_\ve, \varphi(x)]\cdot\( Q^i[\bar{f^\ve}](x)\wedge \mathcal{T}^k_\ve Q^i[\bar{f^\ve}](x)\)\ud x+ \int_{\RR^d} \varphi(x)\cdot\(\mathcal{T}^k_\ve Q^i[\bar{f^\ve}](x)\wedge \mathcal{T}^k_\ve Q^i[\bar{f^\ve}](x)\)\ud x\\
    =&\int_{\RR^d} [\mathcal{T}^k_\ve, \varphi(x)]\cdot\( Q^i[\bar{f^\ve}](x)\wedge \mathcal{T}^k_\ve Q^i[\bar{f^\ve}](x)\) \ud x.
  \end{split}
\end{equation*}
The above formula can also be verified tediously using   components of various tensors as well as formula \eqref{eq:1.12}.
Taking $\ve\to 0$ in the above formula and employing Lemma \ref{variation5} and Lemma \ref{lem:commu-convergence}, we obtain
\begin{align*}
\lim_{\ve\to 0}\int_{\RR^d} \varphi(x)\cdot Q^i[\bar{f^\ve}](x)\wedge \mathcal{A}_\ve Q^i[\bar{f^\ve}](x)\ud x=
  -\frac{\mu}{d}\int_{\RR^d} [\p_j, \varphi(x)]\cdot\( Q^i[\bar{f}]\wedge \p_j Q^i[\bar{f}]\)\ud x.
\end{align*}
The together with  \eqref{eq:lim-right} lead to
\begin{equation*}
 \int_{\RR^d}\p_j \varphi(x)\cdot( Q^i[\bar{f}]\wedge\p_j  Q^i[\bar{f}])=0,
\end{equation*}
or equivalently, in terms of the components of matrix $Q=\{Q_{ij}\}_{1\leq i,j\leq d}$:
\begin{equation}\label{qtensorequ}
 \int_{\O}\p_j \varphi_\ell(x)\varepsilon^{\ell kq}  Q_{ik}[\bar{f}] \p_j  Q_{iq}[\bar{f}]=0.
\end{equation}
Since $\bar{f}$ is uniaxial in $\O$ according to \eqref{extension5}, it follows from \eqref{uniaxialtensor} that $$Q_{ik}(x)=s_2\(n_i(x)n_k(x)-\tfrac 13\delta_{ik}\).$$ Plugging this formula into \eqref{qtensorequ} leads to
\begin{equation*}
 s^2_2\int_{\O}\p_j \varphi(x)\cdot(n(x)\wedge\p_j n(x))=0
\end{equation*}
which is the weak formulation of the  harmonic map equation since $s_2\neq 0$ (see Lemma \ref{lemma1}).
\medskip

\noindent$\bullet$ \textit{Boundary Condition}:
It follows from \eqref{20150626claim1}, \eqref{eq:1.11} that
\begin{equation*}
 \nabla( Q[\bar{f^\ve}]*g_\ve) \rightharpoonup \nabla Q[\bar{f}] ~\text{weakly in}~ L^2_{loc}(\O)
\end{equation*}
 and   continuous embedding $H^1_{loc}(\RR^d)\hookrightarrow H^{\frac12}(\p\O)$  implies that
\begin{equation}\label{eq:1.15}
  Q[\bar{f^\ve}]*g_\ve(x)\rightharpoonup Q[\bar{f}](x) ~\text{weakly in}~ H^{\frac 12}(\p\O).
\end{equation}
On the other hand, it follows from \eqref{extension1} and \eqref{admissible} that
\begin{equation}\label{long1}
  \begin{split}
    &Q[\bar{f^\ve}]*g_\ve(x)\\
    =&\int_{\RR^d}Q[\bar{f^\ve}](x')g_\ve(x-x')\ud x'\\
    =&\int_{\O^c\cap \O^\delta\cup \O_\delta}Q[\bar{f^\ve}](x')g_\ve(x-x')\ud x'\\
    =&\int_{\O^c\cup \O^\delta}Q[h_{n_b}](x')g_\ve(x-x')\ud x'+\int_{\O_\delta}Q[\bar{f^\ve}](x')g_\ve(x-x')\ud x'\\
    =&\int_{\RR^d}Q[h_{n_b}](x')g_\ve(x-x')\ud x'+\int_{\O_\delta}(Q[\bar{f^\ve}](x')-Q[h_{n_b}](x'))g_\ve(x-x')\ud x'.
  \end{split}
\end{equation}
Given $x\in\p\O$, we have $|x-x'|\geq \delta(\ve)$ for any $x'\in \O_{\delta(\ve)}$. Thus, from the decay estimate  \eqref{decay-g}  for $g$, we get
 \begin{equation*}
  \lim_{\ve\to 0} \int_{\O_{\delta(\ve)}}(Q[\bar{f^\ve}](x')-Q[h_{n_b}](x'))g_\ve(x-x')\ud x'=0.
 \end{equation*}
 So we obtain from \eqref{long1} that
\begin{equation}\label{eq:1.16}
  \lim_{\ve\to 0}Q[\bar{f^\ve}]*g_\ve(x)=Q[h_{n_b}]\quad ~a.e.~ x\in\p\O.
\end{equation}
It follows from \eqref{eq:1.15} and \eqref{eq:1.16} that
 \begin{equation*}
   Q[\bar{f}](x)=Q[h_{n_b}](x)=s_2\(n_b(x)\otimes n_b(x)-\tfrac 13 \mathbb{I}_3\),\quad \forall x\in\p\O.
 \end{equation*}
 This together with the fact that
 $$Q[\bar{f}](x)=s_2\(n(x)\otimes n(x)-\tfrac 13 \mathbb{I}_3\),\quad\forall x\in \O$$
implies the boundary condition
 \begin{equation*}
   n(x)=\pm n_b(x),\quad a.e.~  x\in\p\O.
 \end{equation*}

\end{proof}
\section{Asymptotic behavior of global minimizers}\label{sec3}
The task of this section is to prove  Theorem \ref{thmglo}.
We shall first prove the existence of solutions to the minimizing problem:
\begin{Theorem}\label{existence}
  For each $\ve>0$, the   minimizing problem   \eqref{globalmini2} has a solution $f^\ve $ in the admissible class $ \mathscr{A}$.
\end{Theorem}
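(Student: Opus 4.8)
The plan is to apply the direct method of the calculus of variations, the only genuinely non-routine point being the weak continuity of the (concave) interaction term at fixed $\ve$. First I would record that the admissible class is non-empty and that $A_\ve$ is bounded below on it: the boundary profile $h_{n_b}$ belongs to $\mathscr{A}$ since trivially $Q[h_{n_b}]=Q[h_{n_b}]$ in $\O^\delta$; and for every $f\in\mathscr{A}$ Jensen's inequality applied to $t\mapsto t\log t$ with the probability measure $\tfrac{\ud m}{4\pi}$ gives $\int_\BS f\log f\,\ud m\ge-\log(4\pi)$, while the interaction term $\int_{\O\times\BS}f\,\CU_\ve[f]\,\ud m\,\ud x$ is non-negative because $\alpha>0$, $|m\wedge m'|^2\ge0$, $g_\ve\ge0$ and $f\ge0$. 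Hence $I:=\inf_{\mathscr{A}}A_\ve$ is a finite real number. Choosing a minimizing sequence $f_k\in\mathscr{A}$, the non-negativity of the interaction term yields $\int_{\O\times\BS}f_k\log f_k\le A_\ve[f_k]\le C$ uniformly in $k$, so Lemma~\ref{convexlem} provides (along a subsequence) a limit $f\in\mathscr{H}(\O)$ with $f_k\rightharpoonup f$ weakly in $L^1(\BS\times\O)$ and $\int_{\O\times\BS}f\log f\le\liminf_k\int_{\O\times\BS}f_k\log f_k$.

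Next I would check that $f\in\mathscr{A}$. Because $|m\otimes m-\tfrac13\mathbb{I}_3|$ is bounded on $\BS$, testing the weak $L^1$ convergence of $f_k$ against functions $\varphi(x)(m_im_j-\tfrac13\delta_{ij})$ with $\varphi\in L^\infty(\O)$ shows that $Q[f_k]\rightharpoonup Q[f]$ weakly-$*$ in $L^\infty(\O)$; since the $Q[f_k]$ are uniformly bounded and $\O$ has finite measure, this upgrades to weak convergence in $L^2(\O)$. Restricting $\varphi$ to have support in $\O^\delta$ and using $Q[f_k]=Q[h_{n_b}]$ there for every $k$, the identity passes to the limit, so $Q[f]=Q[h_{n_b}]$ a.e. in $\O^\delta$, i.e. $f\in\mathscr{A}$.

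It remains to prove the lower semicontinuity $A_\ve[f]\le\liminf_kA_\ve[f_k]$. The entropy part is exactly Lemma~\ref{convexlem}. For the interaction part I would rewrite it through the $Q$-tensor using $|m\wedge m'|^2=\tfrac23-(m\otimes m-\tfrac13\mathbb{I}_3):(m'\otimes m'-\tfrac13\mathbb{I}_3)$, which gives
\begin{equation*}
  \int_{\O\times\BS}f\,\CU_\ve[f]\,\ud m\,\ud x=\frac{2\alpha}{3}\int_\O\int_\O g_\ve(x-x')\,\ud x\,\ud x'-\alpha\int_\O Q[f](x):\big(Q[f]*_\O g_\ve\big)(x)\,\ud x.
\end{equation*}
The main obstacle is that this quantity is \emph{concave} in $Q[f]$ (the nontrivial term enters with a minus sign and $\widehat{g_\ve}\ge0$ makes the associated quadratic form non-negative), so one cannot directly invoke convexity/weak lower semicontinuity. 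The resolution is that at fixed $\ve>0$ the kernel $g_\ve(x-x')$ belongs to $L^2(\O\times\O)$, so $v\mapsto v*_\O g_\ve$ is a Hilbert--Schmidt, hence compact, operator on $L^2(\O)$; therefore the weak convergence $Q[f_k]\rightharpoonup Q[f]$ in $L^2(\O)$ turns into the \emph{strong} convergence $Q[f_k]*_\O g_\ve\to Q[f]*_\O g_\ve$ in $L^2(\O)$, and pairing a weakly convergent sequence with a strongly convergent one gives $\int_\O Q[f_k]:(Q[f_k]*_\O g_\ve)\to\int_\O Q[f]:(Q[f]*_\O g_\ve)$. Thus the interaction term is in fact weakly continuous along $\{f_k\}$, and adding the lower semicontinuity of the entropy yields $A_\ve[f]\le\liminf_kA_\ve[f_k]=I$. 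Since $f\in\mathscr{A}$, this forces $A_\ve[f]=I$, so $f$ is the desired minimizer.
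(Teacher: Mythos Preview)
Your proof is correct and follows the same direct-method strategy as the paper: a minimizing sequence, entropy bound plus Lemma~\ref{convexlem} for weak $L^1$ compactness, passage of the constraint $Q[f_k]=Q[h_{n_b}]$ on $\O^\delta$ to the limit, and weak continuity (not merely lower semicontinuity) of the interaction term at fixed $\ve$. The only notable difference is how that last point is argued. The paper stays at the level of the density and observes that, since the kernel $(x,m,x',m')\mapsto|m\wedge m'|^2 g_\ve(x-x')$ is continuous on the compact set $\overline{\O}\times\BS\times\overline{\O}\times\BS$, the potentials $\CU_\ve[f_k]$ converge to $\CU_\ve[f]$ uniformly on $\overline{\O\times\BS}$, and then pairs this with the weak $L^1$ convergence of $f_k$. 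You instead project to the $Q$-tensor, note that $g_\ve(\cdot-\cdot)\in L^2(\O\times\O)$ makes $v\mapsto v*_\O g_\ve$ Hilbert--Schmidt on $L^2(\O)$, and pair the resulting strong $L^2$ convergence of $Q[f_k]*_\O g_\ve$ with the weak $L^2$ convergence of $Q[f_k]$. Both are the same compactness-of-the-integral-operator idea; your version has the merit of making explicit why the concavity of the interaction term is harmless, while the paper's is slightly more direct in that it avoids the $Q$-tensor rewriting.
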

\begin{proof}
The proof will be divided into several parts:

\medskip

\noindent$\bullet$\textit{Minimizing sequence:}
 We first note that the space $\mathscr{A}$ defined by \eqref{admissible} is non-empty since the function $h_{n_b}(x,m)$ defined by \eqref{uniaxial} is in $\mathscr{A}$.
  Choosing any minimizing sequence $f_k\in\mathscr{A}$ such that $$\lim_{k\to\infty}  A_\ve[f_k] = \inf_{f\in\mathscr{A}}  A_\ve[f].$$
 Then it follows from \eqref{eq:free energy-inhom-intro} that
  $$\int_\O\int_\BS f_k\log f_k \ud m\ud x\leq C$$
  and this together with Lemma \ref{convexlem} leads to, up to the extraction of a subsequence,
  \begin{equation}\label{weakcon2}
    f_k\rightharpoonup f~\text{ weakly  in}~L^1(\Omega\times\BS)
  \end{equation}
 where  $f\in \mathscr{H}(\O)$.
To prove that $f\in\mathscr{A}$, it follows from \eqref{weakcon2} and the  continuity of the operator $Q$ that,
   \begin{equation}\label{weakcon4}
     Q[f_k]\rightharpoonup Q[f]~\text{ weakly in }L^1(\O).
   \end{equation}
 In order to     verify the condition in \eqref{admissible}, we note that, for fixed $\ve>0$, $g_\ve(x-x')$ is a smooth kernel. Combining this fact with $f_k\in\mathscr{A}$ as well as \eqref{weakcon4}, we arrive at
\begin{equation*}
\begin{split}
  \int_{\O^\delta} s_2 \(n_0(x)\otimes n_0(x)-\tfrac 13\mathbb{I}_3\):\varphi(x)\ud x&=\lim_{k\to\infty} \int_{\O^\delta} Q[f_k](x): \varphi(x)\ud x\\
  &= \int_{\O^\delta} Q[f](x): \varphi(x)\ud x,\quad \forall \varphi\in C_0^\infty(\O^\delta;\RR^{d\times d})
\end{split}
\end{equation*}
 and thus $f\in\mathscr{A}$.

\medskip

\noindent$\bullet$\textit{Proof that $f$ is a global minimizer:} Note that, for fixed $\ve$, the operator \eqref{inhomopot} is a regular integral operator. So up to the extraction of a subsequence,
  \begin{equation*}
    \lim_{k\to \infty}\CU_\ve [f_k](x,m)= \CU_\ve [f](x,m)~\text{strongly in}~C(\overline{\O\times\BS}).
  \end{equation*}
  This together with \eqref{convexlimit} implies the weakly lower-semi-continuity of $A_\ve$:
\begin{equation*}
   \begin{split}
       A_\ve[f]&=\int_\O\int_{\BS}\(f \log f
+f\CU_\ve [f]\)\ud x\ud m\\
     &\leq \liminf_{k \to \infty}\int_{\Omega\times\BS}f_k
\log{f_k} \ud x\ud m+\lim_{k\to \infty}\int_\O\int_{\BS}f_k\CU_\ve [f_k]\ud x\ud m\leq \liminf_{k\to\infty}A_\ve[f_k].
   \end{split}
\end{equation*}
\end{proof}

The following lemma implies that the interaction energy in \eqref{eq:free energy-inhom-intro} can be expressed in terms of the $Q$-tensor of the number density function:
\begin{Lemma}
The convolution type potential $\eqref{eq:free energy-inhom-intro}$ can be written by
  \begin{equation}\label{variation3}
 \begin{split}
A_\ve[f] =&\int_{\O\times\BS}f\log f \ud x\ud m-\alpha\int_\O|Q[f](x)|^2\ud x \\
  &+\alpha\int_{\O}M[f](x):\Big(Q[f](x)- (Q[f]*_\O g_\ve)(x)\Big)\ud x+C_1(g_\ve,\O),
  \end{split}
\end{equation}
where $C_1(g_\ve,\O)$ are explicit constant that are independent of $f$.
\end{Lemma}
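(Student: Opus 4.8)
The plan is to expand the nonlocal term in \eqref{eq:free energy-inhom-intro} and reduce every angular integration to a contraction of the second-moment tensor $M[f](x):=\int_\BS(m\otimes m)f(x,m)\,\ud m$, which by \eqref{eq:1.05} and the normalization $\|f(\cdot,x)\|_{L^1(\BS)}=1$ satisfies $M[f]=Q[f]+\tfrac13\mathbb{I}_3$. First I would apply Fubini (the integrand $f\,\CU_\ve[f]$ is in $L^1$ since $\CU_\ve[f]$ is bounded) together with \eqref{inhomopot}--\eqref{eq:On-interaction} to write
\begin{equation*}
\int_{\O\times\BS}f\,\CU_\ve[f]\,\ud m\,\ud x
=\alpha\int_{\O\times\O}g_\ve(x-x')\Big(\int_{\BS\times\BS}|m\wedge m'|^2 f(x,m)f(x',m')\,\ud m\,\ud m'\Big)\ud x\,\ud x'.
\end{equation*}
Since $|m|=|m'|=1$, the elementary identities $|m\wedge m'|^2=1-(m\cdot m')^2$ and $(m\cdot m')^2=(m\otimes m):(m'\otimes m')$ together with the normalization of $f$ give
\begin{equation*}
\int_{\BS\times\BS}|m\wedge m'|^2 f(x,m)f(x',m')\,\ud m\,\ud m'=1-M[f](x):M[f](x').
\end{equation*}

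Next I would substitute $M[f]=Q[f]+\tfrac13\mathbb{I}_3$ and use that $Q[f]$ is traceless (so $\mathbb{I}_3:Q[f]=0$ and $\mathbb{I}_3:\mathbb{I}_3=3$) to obtain $M[f](x):M[f](x')=Q[f](x):Q[f](x')+\tfrac13$, hence
\begin{equation*}
\int_{\O\times\BS}f\,\CU_\ve[f]\,\ud m\,\ud x=\tfrac{2\alpha}{3}\int_{\O\times\O}g_\ve(x-x')\,\ud x\,\ud x'-\alpha\int_{\O\times\O}g_\ve(x-x')\,Q[f](x):Q[f](x')\,\ud x\,\ud x'.
\end{equation*}
The first integral is independent of $f$ and will be taken as $C_1(g_\ve,\O)$; it is finite since $g$, hence $g_\ve$, is integrable and $\O$ is bounded. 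For the second, the evenness of the radial kernel $g$ gives $g_\ve(x-x')=g_\ve(x'-x)$, so by the definition of $*_\O$ it equals $\int_\O Q[f](x):(Q[f]*_\O g_\ve)(x)\,\ud x$.

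To match the stated form \eqref{variation3}, I would then record two purely algebraic facts, both consequences of $\tr Q[f]=0$: namely $M[f]:Q[f]=|Q[f]|^2$, and, since $\tr(Q[f]*_\O g_\ve)=0$, also $M[f]:(Q[f]*_\O g_\ve)=Q[f]:(Q[f]*_\O g_\ve)$. These give
\begin{equation*}
-\alpha\int_\O Q[f]:(Q[f]*_\O g_\ve)\,\ud x=-\alpha\int_\O|Q[f]|^2\,\ud x+\alpha\int_\O M[f]:\big(Q[f]-Q[f]*_\O g_\ve\big)\,\ud x,
\end{equation*}
and adding the entropy term $\int_{\O\times\BS}f\log f$ and the constant $C_1(g_\ve,\O)$ yields exactly \eqref{variation3}. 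I do not anticipate a genuine obstacle: the computation is entirely bookkeeping, and the only two points requiring care are the non-commutativity of $*_\O$ (harmless here because $g_\ve$ is even) and keeping track of which contractions with $\mathbb{I}_3$ vanish by tracelessness.
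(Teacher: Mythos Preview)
Your proposal is correct and follows essentially the same route as the paper: expand $|m\wedge m'|^2=1-(m\cdot m')^2$, reduce the angular integrals to a contraction of second moments, and then regroup to display the $-\alpha\int_\O|Q[f]|^2$ and the nonlocal remainder. The paper's only cosmetic difference is that it bypasses $M[f]$ altogether by writing $1-(m\cdot m')^2=\tfrac23-(m\otimes m-\tfrac13\mathbb{I}_3):(m'\otimes m'-\tfrac13\mathbb{I}_3)$, landing directly on $Q[f](x):Q[f](x')$; in fact the paper's proof ends with $Q[f]$ in place of the $M[f]$ appearing in the stated formula \eqref{variation3}, and your final tracelessness observation is exactly what reconciles the two.
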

\begin{proof}
It suffices to consider the   interaction part:
\begin{equation*}
  \begin{split}
    &  \alpha^{-1}\int_{\O\times\BS}f(x',m')\CU_\ve[f](x,m)\ud m\ud x \\
    = & \int_{\Omega\times\O}\int_{\BS\times\BS}f(x',m')f(m,x)|m\times m'|^2g_\ve\(x- x'\)\ud m\ud m'\ud x\ud x'\\
    = & \int_{\Omega\times\O}\int_{\BS\times\BS}f( x', m')f( x,m)\(1-| m\cdot m'|^2\)g_\ve\( x- x'\)\ud m\ud m'\ud x\ud x'\\
    = & \int_{\Omega\times\O}\int_{\BS\times\BS}f( x', m')f( x,m)\(\frac23-(m\otimes m-\frac13\mathbb{I}_3)
    :(m'\otimes m'-\frac13\mathbb{I}_3)\)g_\ve\( x- x'\)\ud m\ud m'\ud x\ud x'\\
    =&\frac{2}{3}\int_{\O\times \O} g_\ve(x-x')\ud x\ud x'-\int_\O|Q[f](x)|^2\ud x\\
    &\qquad+\int_{\Omega\times\O}Q[f](x):(Q[f](x)-Q[f](x'))g_\ve\(x-x'\)\ud x'\ud x.
      \end{split}
\end{equation*}
This together with \eqref{eq:free energy-inhom-intro} implies
\begin{equation*}
 \begin{split}
    A_\ve[f] =&\int_{\O\times\BS}f\log f \ud x\ud m-\alpha\int_\O|Q[f](x)|^2\ud x \\
  &+\alpha\int_{\O}Q[f](x):(Q[f](x)- (Q[f]*_\O g_\ve)(x))\ud x+C_1(g_\ve,\O)
  \end{split}
\end{equation*}
with
\begin{equation*}
  C_1(g_\ve,\O)=\frac{2\alpha}{3}\int_{\O\times \O} g_\ve(x-x')\ud x\ud x'.
\end{equation*}
\end{proof}

  \begin{Lemma}\label{eq:1.03}
  Let $\alpha, \eta, g$  be the same as in Theorem 1 and $\delta$ satisfy \eqref{eq:1.20} for some $\sigma\in(0,\frac 12)$. Then there exists some $\ve$-independent constant $C>0$ such that
    \begin{equation}\label{push}
    \frac 2{\ve}\int_\Omega  \(A[f^\ve] -A[h_{n_b}]\) \ud x+\alpha\|\mathcal{T}_\ve Q[\bar{f}^\ve]\|^2_{L^2(\RR^d)}
\leq \alpha\|\mathcal{T}_\ve Q[h_{n_b}]\|^2_{L^2(\RR^d)}+O(\sqrt{\ve})\leq C.
  \end{equation}
  \end{Lemma}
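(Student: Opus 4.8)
The plan is to test the global minimizer $f^\ve$ against the explicit competitor $h_{n_b}$. Note first that $h_{n_b}|_\O$ (whose extension via \eqref{extension1} is again $h_{n_b}$) lies in $\mathscr{A}$: for each $x$ the function $h_{n_b}(x,\cdot)$ is a probability density on $\BS$, and the constraint $Q[f]=Q[h_{n_b}]$ on $\O^\delta$ is automatic for $f=h_{n_b}$. Hence, by the existence result (Theorem \ref{existence}), $A_\ve[f^\ve]\le A_\ve[h_{n_b}]$, and the whole point is to upgrade this one-sided bound into \eqref{push}.

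First I would rewrite $A_\ve[f]$ so as to separate the ``bulk'' part from the boundary layer. Starting from \eqref{variation3} (and using that $M[f]:(Q[f]-Q[f]*_\O g_\ve)=Q[f]:(Q[f]-Q[f]*_\O g_\ve)$ since the second factor is trace free), the polarization identity $q:q'=\tfrac12(|q|^2+|q'|^2-|q-q'|^2)$ together with \eqref{eq:1.02} gives, with $\phi_\ve:=\mathds{1}_{\O}*g_\ve$,
\[
A_\ve[f]=\int_\O A[f]\,\ud x+\frac\alpha2\int_{\O\times\O}g_\ve(x-x')|Q[f](x)-Q[f](x')|^2\,\ud x\,\ud x'+\alpha\int_\O|Q[f]|^2(1-\phi_\ve)\,\ud x+\mathrm{const},
\]
where the additive $\mathrm{const}=-\tfrac23\alpha|\O|+C_1(g_\ve,\O)$ does not depend on $f$. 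Extending the first double integral to $\RR^d\times\RR^d$ through $\bar f$ — which adds back the contributions of $\O\times\O^c$, $\O^c\times\O$ and $\O^c\times\O^c$ — and using the Plancherel identity $\int_{\RR^d\times\RR^d}g_\ve(x-y)|u(x)-u(y)|^2\,\ud x\,\ud y=2\ve\|\mathcal{T}_\ve u\|_{L^2(\RR^d)}^2$ (which rests on $|h(\xi)|^2=1-\hat g(\xi)$, cf. \eqref{lipschitz}, \eqref{ftrans1}), this takes the form
\[
A_\ve[f]=\int_\O A[f]\,\ud x+\alpha\ve\,\|\mathcal{T}_\ve Q[\bar f]\|_{L^2(\RR^d)}^2+R_\ve[f]+\mathrm{const},
\]
where $R_\ve[f]$ collects the three $\O^c$-pieces of the convolution together with $\alpha\int_\O|Q[f]|^2(1-\phi_\ve)$.

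The crucial point is that $R_\ve[f^\ve]=R_\ve[h_{n_b}]+O(e^{-c\ve^{-\sigma}})$. Indeed $Q[\bar f^\ve]$ agrees with $Q[h_{n_b}]$ on $\O^c$, and also on the shell $\O^\delta$ by the constraint in \eqref{admissible}; and, since $\delta=\ve^{1/2-\sigma}$ dominates $\sqrt\ve$, the exponential decay of $g$ forces $g_\ve(x-y)\le Ce^{-c\delta/\sqrt\ve}=Ce^{-c\ve^{-\sigma}}$ whenever $|x-y|\ge\delta$ (cf. \eqref{decay-g}), while $|Q[\cdot]|\le1$. Hence every contribution to $R_\ve[f^\ve]-R_\ve[h_{n_b}]$ is either identically zero (on the set where the two $Q$-tensors coincide) or exponentially small. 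Consequently
\[
A_\ve[f^\ve]-A_\ve[h_{n_b}]=\int_\O\big(A[f^\ve]-A[h_{n_b}]\big)\ud x+\alpha\ve\big(\|\mathcal{T}_\ve Q[\bar f^\ve]\|_{L^2(\RR^d)}^2-\|\mathcal{T}_\ve Q[h_{n_b}]\|_{L^2(\RR^d)}^2\big)+O(e^{-c\ve^{-\sigma}}),
\]
and dividing by $\ve$, using $A_\ve[f^\ve]\le A_\ve[h_{n_b}]$ and $\tfrac1\ve O(e^{-c\ve^{-\sigma}})=O(\sqrt\ve)$, gives the first inequality in \eqref{push}. For the second inequality, $n_b\in H^1(\RR^d;\RR^3)$ has compact support and $\nu\mapsto Q[h_\nu]$ is smooth with bounded derivatives and vanishes at $\nu=0$, so $Q[h_{n_b}]\in H^1(\RR^d)$ with compact support; Lemma \ref{converge-T} then yields $\mathcal{T}_\ve Q[h_{n_b}]\to -i\sqrt{\mu/d}\,\nabla Q[h_{n_b}]$ strongly in $L^2(\RR^d)$, so $\alpha\|\mathcal{T}_\ve Q[h_{n_b}]\|_{L^2(\RR^d)}^2$ is bounded uniformly in $\ve$ and, with the $O(\sqrt\ve)$, absorbed into one $\ve$-independent constant $C$.

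The heart of the matter is the third paragraph: the delicate (though not deep) step is verifying that the boundary-layer residue $R_\ve[f^\ve]-R_\ve[h_{n_b}]$ is exponentially small, and this is exactly where the scale separation $\sqrt\ve\ll\delta$ built into \eqref{eq:1.20} is used. The remaining ingredients are elementary algebra, the Plancherel description of $\mathcal{T}_\ve$, and the convergence Lemma \ref{converge-T}.
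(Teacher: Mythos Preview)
Your proposal is correct and follows essentially the same approach as the paper. Both arguments start from $A_\ve[f^\ve]\le A_\ve[h_{n_b}]$, rewrite the interaction term in terms of $Q$, pass from the $\O$-convolution to the full $\RR^d$-convolution, and control the resulting cross terms using the shell constraint $Q[f^\ve]=Q[h_{n_b}]$ on $\O^\delta$ together with the exponential decay of $g_\ve$ across distance $\delta\gg\sqrt\ve$. The only organizational differences are: (i) you symmetrize first via polarization, which produces the extra $(1-\phi_\ve)|Q|^2$ term (handled by the same tail estimate), whereas the paper works directly with $Q:(Q-Q*_\O g_\ve)$ and obtains a single cross term \eqref{extension3}; and (ii) for the final bound on $\|\mathcal{T}_\ve Q[h_{n_b}]\|_{L^2}^2$ you invoke Lemma~\ref{converge-T}, while the paper performs the direct estimate $\int g_\ve(z)|v(y+z)-v(y)|^2\,\ud z\,\ud y\le \ve\mu\|\nabla v\|_{L^2}^2$.
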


 \begin{proof}
 Since $f^\ve$ is a global minimizer and $h_{n_b}|_{\O}$ belongs to the class $\mathscr{A}$ defined in \eqref{admissible}, we have $$A_\ve[f^\ve]\leq A_\ve [h_{n_b}].$$ Thus it follows from \eqref{variation3} and \eqref{Intro-energy} that
  \begin{equation}\label{variation4}
 \begin{split}
 &\int_\O A[f^\ve](x)\ud x +\alpha\int_{\O}Q[f^\ve](x):(Q[f^\ve](x)- Q[f^\ve]*_\O g_\ve(x))\ud x\\
= &\int_{\O\times\BS}f^\ve\log f^\ve\ud x\ud m+\alpha\(\tfrac 23|\O|-\int_\O|Q[f^\ve](x)|^2\ud x\) \\ & +\alpha\int_{\O}Q[f^\ve](x):(Q[f^\ve](x)- Q[f^\ve]*_\O g_\ve(x))\ud x\\
\leq  & \int_{\O\times\BS}h_{n_b}\ln h_{n_b}\ud x\ud m+\alpha\(\tfrac 23|\O|-\int_\O|Q[h_{n_b}](x)|^2\ud x\)  \\
&+\alpha\int_{\O}Q[h_{n_b}](x):(Q[h_{n_b}](x)- Q[h_{n_b}]*_\O g_\ve(x))\ud x\\
= &\int_\O A[h_{n_b}](x)\ud x+\alpha\int_{\O}Q[h_{n_b}](x):(Q[h_{n_b}](x)- Q[h_{n_b}]*_\O g_\ve(x))\ud x.
  \end{split}
\end{equation}
On the other hand, it follows from the definition of $\bar{f^\ve}$ in \eqref{extension1} that
\begin{equation*}
\begin{split}
&\int_{\RR^d\times\RR^d}Q[\bar{f^\ve}](x):Q[\bar{f^\ve}](x')g_\ve(x-x')\ud x\ud x'
-\int_{\O^c \times\O^c }Q[h_{n_b} ](x):Q[h_{n_b} ](x')g_\ve(x-x')\ud x\ud x'\\
=&\int_{\O\times\O}Q[{f^\ve}](x):Q[{f^\ve}](x')g_\ve(x-x')\ud x\ud x' +2\int_{\O^c}
 Q[\bar{f^\ve}](x):\int_\O Q[\bar{f^\ve}](x')g_\ve(x-x')\ud x'\ud x\\
=&\int_{\O}Q[{f^\ve}](x):\(Q[{f^\ve}]*_\O g_\ve\)(x)\ud x  +2\int_{\O^c  }
Q[h_{n_b}](x):\int_\O Q[\bar{f^\ve}](x')g_\ve(x-x')\ud x'\ud x.
  \end{split}
\end{equation*}
Likewise,
\begin{equation*}
  \begin{split}
    &\int_{\RR^d\times\RR^d}Q[h_{n_b} ](x):Q[h_{n_b} ](x')g_\ve(x-x')\ud x\ud x'-\int_{\O^c \times\O^c }Q[h_{n_b} ](x):Q[h_{n_b} ](x')g_\ve(x-x')\ud x\ud x'\\
    =&\int_{\O}Q[h_{n_b}](x):\(Q[h_{n_b}]*_\O g_\ve\)(x)\ud x  +2\int_{\O^c  }Q[ h_{n_b}](x):\int_\O Q[ h_{n_b}](x')g_\ve(x-x')\ud x'\ud x.
  \end{split}
\end{equation*}
Subtracting the above two identities leads to
\begin{equation}\label{extension2}
  \begin{split}
    &\int_{\RR^d\times\RR^d}Q[\bar{f^\ve}](x):Q[\bar{f^\ve}](x')g_\ve(x-x')\ud x\ud x'-\int_{\RR^d\times\RR^d}Q[h_{n_b} ](x):Q[h_{n_b} ](x')g_\ve(x-x')\ud x\ud x'\\
    =&\int_{\O}Q[{f^\ve}](x):\(Q[{f^\ve}]*_\O g_\ve\)(x)\ud x -\int_{\O}Q[ h_{n_b} ](x):\(Q[ h_{n_b} ]*_\O g_\ve\)(x)\ud x  \\
    & +2\int_{\O^c  }Q[ h_{n_b}  ](x):\int_\O \(Q[ f^\ve ](x')-Q[h_{n_b} ](x')\)g_\ve(x-x')\ud x'\ud x\\
    =&\int_{\O}Q[{f^\ve}](x):\(Q[{f^\ve}]*_\O g_\ve\)(x)\ud x -\int_{\O}Q[ h_{n_b} ](x):\(Q[ h_{n_b} ]*_\O g_\ve\)(x)\ud x  \\
    & +2\int_{\O^c  }Q[ h_{n_b}  ](x):\int_{\O_\delta} \(Q[ f^\ve ](x')-Q[h_{n_b} ](x')\)g_\ve(x-x')\ud x'\ud x.
  \end{split}
\end{equation}
In the last step, we used the \eqref{admissible}. Applying  \eqref{decay-g}  again, the last integral in \eqref{extension2} can be estimated by
\begin{equation}\label{extension3}
  \begin{split}
    &\left|\int_{\O^c  }Q[ h_{n_b} ](x):\int_{\O_\delta} \(Q[ f^\ve ](x')-Q[h_{n_b} ](x')\)g_\ve(x-x')\ud x'\ud x\right|\\
    \leq & C e^{-\frac\delta{\sqrt\ve}}\int_{\O^c  }\left|Q[h_{n_b} ](x)\right|\ud x\int_{\O_\delta}| Q[ f^\ve ](x')-Q[h_{n_b}  ](x')|\ud x'\\
     \leq & C \ve^{\frac 32}\int_{\RR^d}|Q[h_{n_b} ](x)|\ud x\leq C\ve^{\frac 32}.
      \end{split}
\end{equation}
In the last step of above estimate, we employed Lemma \ref{lemma1} together with \eqref{bound4} to show that
\begin{equation*}
  \int_{\RR^d}|Q[h_{n_b} ](x)|\ud x\leq C<\infty.
\end{equation*}
Combining \eqref{extension2} and \eqref{extension3}
\begin{equation}\label{extension4}
  \begin{split}
    &\int_{\RR^d}Q[\bar{f^\ve}](x):(Q[\bar{f^\ve}]* g_\ve)(x)\ud x -\int_{\RR^d}Q[h_{n_b}](x):(Q[h_{n_b} ]* g_\ve)(x)\ud x \\
    =&\int_{\O}Q[{f^\ve}](x):\(Q[f^\ve]*_\O g_\ve\)(x)\ud x -\int_{\O}Q[ h_{n_b}  ](x):\(Q[h_{n_b}  ]*_\O g_\ve\)(x)\ud x  +C \ve^{\frac 32}.
  \end{split}
\end{equation}
Substituting \eqref{extension4} into \eqref{variation4} and then employing \eqref{extension1} as well as  the symmetric property of convolution $*$, we get
\begin{equation}\label{eq:1.22}
  \begin{split}
    &\frac 1\ve\int_\O(A[f^\ve]-A[h_{n_b} ])(x)\ud x+\frac{\alpha}{\ve}\int_{\RR^d}
    Q[\bar{f^\ve}](x):\Big(Q[\bar{f^\ve}](x)- (Q[\bar{f^\ve}]*g_\ve)(x)\Big)\ud x\\
    \leq &\frac{\alpha}{\ve}\int_{\RR^d}Q[h_{n_b} ](x):\Big(Q[h_{n_b} ](x)-(Q[h_{n_b} ]*g_\ve)(x)\Big)\ud x+C \ve^{\frac 12}.
  \end{split}
\end{equation}
In view of \eqref{squrtroot} and \eqref{multiplier}, this implies the first inequality of \eqref{push}. To prove the second part, we shall estimate the right hand side of \eqref{eq:1.22}.
For the sake of simplicity, we shall denote $Q[h_{n_b} ]$ by $v$ in the remaining part of the proof:
   \begin{equation*}
     \begin{split}
    & 2\int_{\RR^d}v(x):\(v(x)-(v *g_\ve)(x)\)\ud x\\
 =   &    \int_{\RR^d\times\RR^d}\left|v(x)-v(y)\right|^2
g_\ve\(x-y\)\ud x\ud y\\
=&\int_{\RR^d\times\RR^d}\left|v(y+z)-v(y)\right|^2
g_\ve\(z\)\ud z\ud y\\
\leq &\int_{\RR^d\times\RR^d}|z|^2\int_0^1\left|\nabla v(yt+(1-t)(y+z))\right|^2\ud t
g_\ve\(z\)\ud z\ud y\\
=&\ve\int_0^1\int_{\RR^d}|\tfrac z{\sqrt{\ve}}|^2g_\ve(z)\(\int_{\RR^d}\left|\nabla v(y+(1-t)z)\right|^2
dy\)\ud z\ud t\\
=&\ve\int_0^1\int_{\RR^d}|\tfrac z{\sqrt{\ve}}|^2g_\ve(z)\|\nabla  v \|^2_{L^2(\RR^d)}\ud z\ud t\\
=& \ve \|\nabla v \|^2_{L^2(\RR^d)}\int_{\RR^d}|x|^2g(x)\ud x
\leq  C\|\nabla n_b\|_{L^2(\RR^d)}^2\leq C.
     \end{split}
   \end{equation*}
   In the last step, we employed Lemma \ref{lemma1} and \eqref{bound4} successively.
\end{proof}
Now we are in position to prove the second statement of Theorem \ref{thmglo}.
\begin{proof}[Complete the proof of Theorem \ref{thmglo}]
The existence of solution $f^\ve$ to \eqref{globalmini2} is proved in Theorem \ref{existence}. Moreover,  it follows from Lemma \ref{eq:1.03} that $f^\ve$ satisfies \eqref{push}, which is equivalent to \eqref{push2}, according to \eqref{defteps}. So the hypothesises for applying Proposition \ref{compactness1} is fulfilled for the global minimizers $f^\ve$:
\begin{equation*}
\bar{f^\ve}\rightharpoonup \bar{f}~\text{ weakly  in}~L_{loc}^1(\RR^d\times\BS),
\end{equation*}
where  $\bar{f}(x,m)$ is given by
\begin{equation*}
\bar{f}(x,m)=\left\{
\begin{array}{rl}
\frac {e^{\eta(m\cdot n(x))^2}}{\int_\BS e^{\eta(m\cdot n(x))^2}dm}&~\text{for}~ x\in \O,\\
h_{n_b}(x,m)&~\text{for}~ x\in \RR^d\backslash\O
\end{array}
\right.
\end{equation*}
  for some $n(x)\in H^1(\O;\BS)$. Likewise, we can show, as in the second part of the proof of  Theorem \ref{thmcri}, that
   \begin{equation}\label{eq:1.04}
     n(x)|_{\p\O}=\pm n_b.
   \end{equation}
    It remains to show that $n(x)$ is a minimizing harmonic map provided that $n_b|_\O$ is a harmonic map with boundary condition $n_b|_{\p\O}$. It follows from  \eqref{push} that
       \begin{equation}\label{eq:1.23}
    \frac 2{\ve}\int_\Omega  \(A[f^\ve] -A[h_{n_b}]\) \ud x+\alpha\|\mathcal{T}_\ve Q[\bar{f}^\ve]\|^2_{L^2(\RR^d)}
\leq \alpha\|\mathcal{T}_\ve Q[h_{n_b}]\|^2_{L^2(\RR^d)}+O(\sqrt{\ve}).
  \end{equation}
On the other hand, Lemma \ref{converge-T} and  Lemma \ref{variation5} state that
\begin{equation*}
\begin{split}
   \mathcal{T}_\ve Q[\bar{f^\ve}]&\rightharpoonup -i\sqrt{\tfrac \mu d} \nabla Q[f]~\text{ weakly in }L^2_{loc}(\RR^d),\\
    \mathcal{T}_\ve Q[h_{n_b}]&\rightharpoonup -i\sqrt{\tfrac \mu d} \nabla Q[h_{n_b}]~\text{strongly in }L^2(\RR^d).
\end{split}
\end{equation*}
These enable us to pass to the limit in \eqref{eq:1.23} using  lower semi-continuity
\begin{equation*}
\|\nabla Q[\bar{f}]\|^2_{L^2(\RR^d)}\leq\|\nabla Q[h_{n_b}]\|^2_{L^2(\RR^d)},
\end{equation*}
which is equivalent to
\begin{equation*}
\int_{\O}|\nabla Q[\bar{f}]|^2(x)\ud x+\int_{\RR^d\backslash\O}|\nabla Q[h_{n_b}]|^2(x)\ud x
\leq \int_{\O}|\nabla Q[h_{n_b}]|^2(x)\ud x+\int_{\RR^d\backslash\O}|\nabla Q[h_{n_b}]|^2(x)\ud x.
\end{equation*}
Owning to \eqref{uniaxialtensor}, we obtain
$$\|\nabla n(x)\|_{L^2(\O)}\leq \|\nabla n_b(x)\|_{L^2(\O)}.$$
This combined with \eqref{eq:1.04} implies that $n(x)$ is a minimizing harmonic map.
\end{proof}

\section*{Acknowledgments}
The authors would like to thank Professor Zhifei Zhang for helpful discussions. W. Wang is partly supported by NSF of China under Grant 11501502.

\end{document}